\newcommand{\C}{{\mathbb  C}}
\newcommand{\R}{{\mathbb  R}}
\newcommand{\N}{{\mathbb  N}}
\newcommand{\T}{{\mathbb  T}}
\newcommand{\F}{{\mathcal F}}
\renewcommand{\L}{{\mathcal L}}
\newcommand{\OO}{{\mathcal O}}
\newcommand{\boldcdot}{{\mathbf \cdot}}
\newcommand{\scalar}[2]{{\langle#1,#2\rangle}}
\newcommand{\PSH}{{\operatorname{{\mathcal {PSH}}}}}
\renewcommand{\Im}{{\operatorname{Im}}}
\renewcommand{\Re}{{\operatorname{Re}}}
\newcommand{\supp}{{\text{supp}\, }}
\newcommand{\chsupp}{{\text{ch\,supp}\, }}
\numberwithin{equation}{section}
\newtheorem{theorem+}           {Theorem}      [section]
\newtheorem{definition+}  [theorem+]  {Definition}
\newtheorem{lemma+}  [theorem+]  {Lemma}
\newtheorem{corollary+}  [theorem+]  {Corollary}
\newtheorem{proposition+}  [theorem+]  {Proposition}
\newtheorem{example+}  [theorem+]  {Example}
\newenvironment{theorem}{\begin{theorem+}\sl}{\end{theorem+}\rm}
\newenvironment{corollary}{\begin{corollary+}\sl}{\end{corollary+}\rm}
\newenvironment{proposition}{\begin{proposition+}\sl}{\end{proposition+}\rm}
\title[
Siciak's extremal functions and Helgason's support theorem]
{Siciak's homogeneous extremal functions,  
holomorphic extension  and  a generalization of 
Helgason's support theorem}
\author{J\"oran Bergh and Ragnar Sigurdsson}
\date{\today}
\dedicatory{{\small \it Dedicated to the memory of Professor J\'ozef Siciak}}
\subjclass[2010]{Primary 32A15; Secondary 30D15, 32U35}
\keywords{Radon transform, entire function, 
Siciak's weighted homogeneous extremal function,
pluripolar set, growth order and type, indicator function, 
exponential type, Paley-Wiener theorem.}
\begin{document}
\maketitle

\begin{abstract}
\noindent 
We prove that a function, which is defined on a union 
of lines $\C E$  through the origin in $\C^n$ with direction
vectors in $E\subset \C^n$
and is holomorphic 
of fixed finite order and finite type along each line, 
extends to an entire   holomorphic function 
on  $\C^n$ of the same order and finite type,
provided that $E$ has positive homogeneous capacity
in the sense of Siciak 
and all directional derivatives along the lines
satisfy a necessary compatibility condition at the origin.
We are able to estimate the indicator function of
the extension in terms of Siciak's weighted 
homogeneous extremal function, where the weight 
is a function of the type of the given function on 
each given line.
As an application we prove a generalization of 
Helgason's support theorem by showing how the support 
of a continuous function with rapid decrease at infinity
can be located from partial information on the support
of its Radon transform.
\end{abstract}

\section{Introduction}
\label{sec:1}

\noindent
This study grew out of the problem of locating the support of
a continuous function $u$ on $\R^n$ with rapid decrease at 
infinity   
from partial information of  the support of the Radon transform
$(\omega,p)\mapsto {\mathcal R}u(\omega,p)$, which is  
defined for $(\omega,p)\in {\mathbb S}^{n-1}\times \R$
as the integral of $u$  over  the hyperplane given
by the equation $\scalar x\omega=p$ 
with respect to the Lebesgue measure.  More precisely,
we assume that we have given a subset $E$ of ${\mathbb S}^{n-1}$ 
such that the convex hull of the support of the function 
$\R\ni p\mapsto {\mathcal R}u(\omega,p)$ is
contained in a closed bounded interval $[a_\omega,b_\omega]$ 
for every $\omega$ in  $E$ and from this information only 
we want to locate the support of $u$.  

Recall that Helgason's support theorem 
states that if $u\in C(\R^n)$ is rapidly decreasing, i.e., 
$|x|^k u(x)$ is bounded for every $k=1,2,3,\dots$, and  
there exists a compact convex subset $K$ of $\R^n$ 
with the property that 
${\mathcal R}u(\omega,p)=0$ for  every $(\omega,p)$ such that
the hyperplane defined by the equation $\scalar x\omega=p$ 
does not intersect $K$, then the support of $u$ is contained 
in $K$.  See Helgason \cite{Helgason:2011}, Th.~I.2.6 and Cor.~I.2.8.

Our result, Theorem \ref{th:6.1}, 
states  that if the set $E\subseteq {\mathbb S}^{n-1}$ 
is compact with positive homogeneous capacity in 
$\C^n$ in the sense of Siciak, 
e.g., if $E$ has non-empty interior in ${\mathbb S}^{n-1}$,
and 
$\sigma\colon E\to \R_+$ defined by
$\sigma(\omega)
=\max\{-a_\omega,b_\omega\}$ for $\omega\in E$, 
is bounded above, then
the support of $u$ is contained in the compact 
convex set
\begin{equation}
\label{eq:1.1}
\{x\in \R^n \,;\, \scalar x\omega\leq  \Psi_{E,\sigma}^*(\omega), 
\forall \omega\in {\mathbb S}^{n-1}\},
\end{equation}
where $\Psi_{E,\sigma}^*$ is the 
{\it upper semi-continuous regularization}  of  {\em Siciak's 
homo\-geneous extremal  function $\Psi_{E,\sigma}$ with weight}
$\sigma$, defined on $\C^n$ by
$$
\Psi_{E,\sigma}(\zeta)
=\sup\{ |p(\zeta)|^{1/k}\,;\, p\in {\mathcal P}^h(\C^n), \ 
k=\deg p\geq 1, \ |p|^{1/k}\leq \sigma \text{ on } E\}, 
$$
and  ${\mathcal P}^h(\C^n)$ 
is the set of homogeneous polynomials of $n$ complex variables. 
Furthermore, if $E$ is the closure of its relative interior in 
${\mathbb S}^{n-1}$ and the functions $\omega\mapsto a_\omega$ and
$\omega\mapsto b_\omega$ are lower and upper semicontinuous
functions on $E$, respectively, then the support of $u$ is contained
in
\begin{equation}
\label{eq:1.1a}
\{x\in \R^n \,;\, \scalar x\omega\leq  \Psi_{E,\sigma}^*(\omega), 
\forall \omega\in {\mathbb S}^{n-1}, 
 a_\omega\leq \scalar x\omega \leq b_\omega,
\forall \omega\in E\}.
\end{equation}

If we take the Fourier transform of the function 
${\mathcal R}u(\omega,\cdot)$, then we get the formula 
$\F_1\big({\mathcal R}u(\omega,\cdot)\big)(s)
=\F_nu(s\omega)=\widehat u(s\omega)$, 
where $\F_1$  and $\F_n$ are the Fourier transformations on 
$\R$ and  $\R^n$, respectively.  Since $u$ is rapidly decreasing,
we have $\widehat u\in C^\infty(\R^n)$ and  the chain rule gives
\begin{equation}
\label{eq:1.2}
  \dfrac 1{k!}\boldcdot 
\dfrac{d^k}{ds^k}
\F_1\big({\mathcal R}u(\omega,\cdot)\big)(s)\bigg|_{s=0}
=\sum_{|\alpha|=k}\dfrac{\partial^\alpha \widehat u(0)}{\alpha!} \omega^\alpha,
\qquad  \omega\in E,
\end{equation}
with multi-index notation:
$\alpha\in \N^n$, $\N=\{0,1,2,\dots\}$, 
$|\alpha|=\alpha_1+\cdots+\alpha_n$, 
$\alpha!=\alpha_1!\cdots\alpha_n!$, 
$s^\alpha=s_1^{\alpha_1}\cdots s_n^{\alpha_n}$,
$\partial^\alpha \widehat u(\xi)=
(\partial^{\alpha_1}_1\cdots 
\partial^{\alpha_n}_n)\widehat u(\xi)$,
and $\partial_j=\partial/\partial{\xi_j}$.

For every $\omega\in E$ the function
${\mathcal R}u(\omega,\cdot)$ has compact support, so the function
$\F_1\big({\mathcal R}u(\omega,\cdot)\big)$
extends to an entire function  of exponential type on $\C$.
The family 
$\big(\F_1\big({\mathcal R}u(\omega,\cdot)\big)\big)_{\omega\in E}$
defines a function $f\colon\C E\to \C$, by 
$f(z\omega)=\F_1\big({\mathcal R}u(\omega,\cdot)\big)(z)$ for 
$z\in \C$ and $\omega\in E$, and it satisfies 
\begin{equation}
\label{eq:1.3}
  |f(z\omega)|\leq \|{\mathcal R}u(\omega,\cdot)\|_{L^1(\R)}
e^{\sigma(\omega)|z|}, \qquad z\in \C.
\end{equation}

This leads us to a general problem of holomorphic extension.   
Assume that we have
given a subset $E$ of $\C^n$ and  
a function $f\colon \C E\to \C$, which is holomorphic
along each of the lines $\C\zeta$  and of 
type $\sigma(\zeta)$ with respect to the growth order $\varrho>0$ 
with uniform estimates, 
\begin{equation}
\label{eq:1.4}
  |f(z\zeta)|\leq Ce^{\sigma(\zeta)|z|^\varrho}, \qquad z\in \C, \ \zeta\in E.
\end{equation}
We would like to know under which conditions on $E$ it is possible to
extend $f$ to an entire function on $\C^n$ with similar growth
estimates.  
It is necessary to impose
some condition at the origin where all the different lines $\C\zeta$
intersect, for if $F$ is some holomorphic extension of $f$ to
a neighborhood of the origin, then $F$ links together the
power series of the functions $\C\ni z\mapsto f(z\zeta)$ at
the origin, because the chain rule implies 
\begin{equation*}
  \dfrac 1{k!}\boldcdot \dfrac{d^k}{dz^k}f(z\zeta)\bigg|_{z=0}
=\sum_{|\alpha|=k}\dfrac{\partial^\alpha F(0)}{\alpha!} \zeta^\alpha,
\qquad  \zeta\in E.
\end{equation*}
Our main result, Theorem \ref{th:4.1},  
states that if we assume that 
$E$ is compact with positive homogeneous capacity
in the sense of Siciak and $f$ satisfies
(\ref{eq:1.4}) together with 
a certain compatibility condition at the origin, then $f$ 
extends to an entire function on $\C^n$ with the growth
property that for every
$\varepsilon>0$ there exists a constant $C_\varepsilon>0$ 
such that 
\begin{equation}
\label{eq:1.5}
  |f(\zeta)|
\leq C_\varepsilon 
e^{\Psi_{E,\gamma}(\zeta)^\varrho+\varepsilon|\zeta|^\varrho},
\qquad \zeta\in \C^n,
\end{equation}
where $\gamma=\sigma^{1/\varrho}$.  As a consequence we
get the inequality $i_f\leq \Psi_{E,\gamma}^\varrho$, where
$i_f$ is the indicator function of $f$.

In the special case $f(z\omega)=\F_1\big({\mathcal
  R}u(\omega,\cdot)\big)(z)$
above we have $\varrho=1$. 
The indicator function of $\widehat u$ 
satisfies $i_{\widehat u}(\zeta)\leq i_{\widehat u}(i\Im \zeta)$
and the supporting function $H$  of $\chsupp u$ is given
on  $\R^n$ by $H(\xi)=i_{\widehat u}^*(i\xi)$.
The inequality  $H(\xi)\leq  \Psi_{E,\sigma}^*(\xi)$ enables
us to locate the support of $u$ in (\ref{eq:1.1}).
We have $i_{\widehat u}(-i\omega)\leq -a_\omega$ 
and $i_{\widehat u}(i\omega)\leq b_\omega$ for every
$\omega\in E$. In order to be able to conclude that 
$H(-\omega)\leq -a_\omega$ 
and $H(\omega)\leq b_\omega$
we need some regularity of $E$ 
and $\omega\mapsto (a_\omega,b_\omega)$.  Then
(\ref{eq:1.1a}) holds.

The plan of the paper is as follows.  In Section 2 we 
review a few facts on Siciak's extremal functions
and give some examples.   In Section 3 we review 
a few results on growth properties of entire functions
of one complex variable to be used later on.
In Section 4 we state and prove our main result
on the extension of a function on $\C E$ with growth
estimates to an entire function with similar growth
estimates on $\C^n$.  In Section 5 we review 
the variants of Paley-Wiener theorems which hold
for $L^2$ functions with compact support, distributions with compact
support, hyperfunctions with support in $\R^n$,
and analytic functionals on $\C^n$, and show
how the estimates proved in Section 4 can be used to 
locate supports and carriers.  In Section 6 we finally
prove the formulas (\ref{eq:1.1}) and (\ref{eq:1.1a}) 
for the location of the support of $u$ from partial 
information of the support
of $\R\ni p \mapsto {\mathcal R}u(\omega,p)$.

\noindent
{\bf Acknowledgement:}  A preliminary version
of this paper was presented  in the Siciak seminar at the Jagiellonian 
University in Cracow in 2012,  while the second author 
was a visiting researcher there.  
We are thankful to Professor Siciak for his interest in this work
and his advice, which led to important improvements of the paper.
We thank Miros{\l}aw Baran in Cracow,
Jan Boman in Stockholm, and Jan Wiegerinck in Amsterdam 
for helpful discussions.

\section{Siciak's extremal functions}
\label{sec:2}

\noindent
In this section we have collected a few results on 
extremal plurisubharmonic functions and capacities, which are related
to our results.
First a few words on the notation.  We use $\scalar\boldcdot\boldcdot$
both for the euclidean 
inner product on $\R^n$ and the natural bilinear form on $\C^n$,
$$
\scalar x\xi=\sum_{j=1}^nx_j\xi_j, \quad x,\xi \in \R^n, \quad
\scalar z\zeta=\sum_{j=1}^nz_j\zeta_j , \quad z,\zeta \in \C^n.
$$
Then the
hermitian form is $(z,\zeta)\mapsto \scalar z{\bar\zeta}$
and the euclidean norm is $\zeta\mapsto 
|\zeta|=\scalar \zeta{\bar\zeta}^{1/2}$.  

We let $\OO(X)$ denote space of all
holomorphic functions on an open subset $X$ of $\C^n$ and
$\PSH(X)$ the set of all plurisubharmonic functions on
$X$ which are not identically $-\infty$ in any connected component
of $X$. 

We let $\L=\L(\C^n)$ denote the set of all $u\in \PSH(\C^n)$
satisfying 
$$
u(\zeta)\leq \log^+|\zeta|+c_u, \qquad \zeta\in \C^n,
$$
for some constant depending on $u$, and we let $\L^h=\L^h(\C^n)$
denote the set of all $u$ in $\L$ which are {\it logarithmically
  homogeneous}, i.e., $u(t\zeta)=\log|t|+u(\zeta)$ for every
$t\in \C$ and $\zeta\in \C^n$.

We let  ${\mathcal P}(\C^n)$ denote the space of polynomials in 
$n$ complex variables  and ${\mathcal P}^h(\C^n)$ denotes the 
subset of homogeneous polynomials.  For every subset $E$ of
$\C^n$ and every function $\gamma\colon E\to \R_+$ we
define {\it Siciak's extremal function with weight $\gamma$}
on $\C^n$ by
$$
\Phi_{E,\gamma}(\zeta)=\sup\{ |p(\zeta)|^{1/k}\,;\, 
p\in {\mathcal P}(\C^n), \ 
k=\deg p\geq 1, \ |p|^{1/k}\leq \gamma \text{ on } E\},
$$
and  {\it Siciak's  homogeneous extremal function with weight
  $\gamma$} by
$$
\Psi_{E,\gamma}(\zeta)=
\sup\{ |p(\zeta)|^{1/k}\,;\, p\in {\mathcal P}^h(\C^n),
\  k=\deg p\geq 1, \ |p|^{1/k} \leq \gamma \text { on } E\}.
$$
In the special case $\gamma=1$ we denote these functions 
by $\Phi_E$ and $\Psi_E$ and call them 
{\it Siciak's   extremal function of the set $E$} and
{\it Siciak's   homogeneous extremal function of the set $E$},
respectively.  
We observe that the functions 
$\Psi_{E,\gamma}$ are {\it absolutely homogeneous of degree $1$},
i.e., $\Psi_{E,\gamma}(t\zeta)=|t|\Psi_{E,\gamma}(\zeta)$ for every
$t\in \C$ and $\zeta\in \C^n$.  We also observe that
if $\gamma$ is bounded above on $E$, $\gamma\leq \gamma_m$, for some
constant $\gamma_m$, then $\Phi_{E,\gamma}\leq \gamma_m \Phi_E$ and
$\Psi_{E,\gamma}\leq \gamma_m \Psi_E$.

Recall that a subset $E$ of $\C^n$ is said to be pluripolar
if every $a\in E$ has a connected neighborhood $U_a$ and 
$v\in \PSH(U_a)$  not identically $-\infty$ such that
$E\cap U_a\subseteq \{z\in U_a \,;\, v(z)=-\infty\}$.
If $E$ is not pluripolar we say that $E$ is {\it non-pluripolar}.
Josefson \cite{Jos:1978} proved that every pluripolar set $E$ is contained
in a set $\{\zeta\in \C^n\,;\, u(\zeta)=-\infty\}$ for some
$u\in \PSH(\C^n)$ and Siciak \cite{Sic:1982} proved that $u$ can
even be chosen in $\L$.

For every function $\varphi:E\to \overline{\R}=\R\cup\{\pm\infty\}$  
we define the {\it Siciak-Zakharyuta function with weight $\varphi$}
by
$$
V_{E,\varphi}(\zeta)=\sup \{u(\zeta)\,;\ u\in \L, u|E\leq \varphi\}, \qquad
\zeta\in \C^n,
$$ 
and the {\it homogeneous Siciak-Zakharyuta function with weight $\varphi$}
by
$$
V_{E,\varphi}^h(\zeta)=\sup \{u(\zeta)\,;\ u\in \L^h, u|E\leq \varphi\}, \qquad
\zeta\in \C^n.
$$ 
If $\varphi$ is the constant function $0$, then we 
we call these functions the {\it Siciak-Zakharyuta 
function of the set $E$ } and the 
{\it homogeneous Siciak-Zakharyuta function of the set $E$}
and denote them by $V_E$ and $V_E^h$, respectively.  In general,
if the function $\varphi$ is bounded above by the constant
$\varphi_m$, then we have $V_{E,\varphi}\leq \varphi_m+V_E$
and  $V_{E,\varphi}^h\leq \varphi_m+V_E^h$. 
Siciak \cite{Sic:1982} and Zakharyuta \cite{Zak:1976} proved
that  $V_E=\log \Phi_E$ and $V_E^h=\log \Psi_E$
for every compact subset of $\C^n$.

Let $\|\cdot\|$ be a complex norm on $\C^n$.  
For every  compact subset $E$ of $\C^n$ we define
the {\it capacity of $E$ in the sense of Siciak} by
$$
\varrho(E)= \exp\big(-\sup_{\|\zeta\|=1} V_E^*(\zeta)\big)
=\big(\sup_{\|\zeta\|=1} \Phi_E^*(\zeta)\big)^{-1}, 
$$
and the {\it homogeneous capacity of $E$  in the sense of Siciak} by
$$
\varrho^h(E)= \exp\big(-\sup_{\|\zeta\|=1} V_E^{h*}(\zeta)\big) 
=\big(\sup_{\|\zeta\|=1} \Psi_E^*(\zeta)\big)^{-1}.
$$
The compact subset $E$ of $\C^n$ 
is pluripolar if and only if $V_E^*\equiv +\infty$ if and only
if $\varrho(E)=0$.  (See Klimek \cite{Kli:1991}.)  Observe that
the definitions of the set functions $\varrho$ and $\varrho^h$ 
are depending on choice of the norm. 
The following result of Siciak \cite{Sic:1982}, Th.~1.10, gives
a geometric description of the homogeneous capacity:

\begin{theorem}
  \label{th:2.1} 
Let $\|\cdot\|$ be a complex norm on $\C^n$ with unit ball $B$ 
and $\varrho^h$ be the corresponding homogeneous capacity.  
Let $E$ be a  compact set in $\C^n$ and let 
$\widehat E=\{ z\in \C^n \,;\, \Psi_E^*(z)<1\}$ be the
{\it homogeneous hull of $E$}.  Then 
$$\varrho^h(E)=\sup\{r\in \R_+ \,;\, rB\subset  \widehat E\}.
$$ 
\end{theorem}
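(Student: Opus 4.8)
The plan is to reduce the whole statement to the behaviour of $\Psi_E^*$ on the unit sphere $S=\{\zeta\in\C^n:\|\zeta\|=1\}$, since by the very definition of the homogeneous capacity one has $\varrho^h(E)=\big(\sup_{\|\zeta\|=1}\Psi_E^*(\zeta)\big)^{-1}=1/M$, where I write $M=\sup_{\|\zeta\|=1}\Psi_E^*(\zeta)\in[0,+\infty]$ and use the conventions $1/0=+\infty$ and $1/(+\infty)=0$. The one preliminary fact I need is that $\Psi_E^*$, like $\Psi_E$, is absolutely homogeneous of degree $1$. This is inherited from $\Psi_E$ through the regularization: for fixed $t\in\C\setminus\{0\}$ the map $\phi_t\colon\zeta\mapsto t\zeta$ is a linear homeomorphism of $\C^n$, so upper semicontinuous regularization commutes with it, $(\Psi_E\circ\phi_t)^*=\Psi_E^*\circ\phi_t$, while it also commutes with multiplication by the positive constant $|t|$; combining these with the identity $\Psi_E(t\zeta)=|t|\Psi_E(\zeta)$ yields $\Psi_E^*(t\zeta)=|t|\Psi_E^*(\zeta)$. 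In particular $\Psi_E^*(0)=0$, so $0\in\widehat E$ and $r=0$ always belongs to the set on the right-hand side.

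The heart of the argument is the elementary computation of the supremum of $\Psi_E^*$ over the closed ball $rB=\{\zeta:\|\zeta\|\le r\}$. For $z\ne0$ the degree-one homogeneity, applied with the positive scalar $\|z\|$, gives $\Psi_E^*(z)=\|z\|\,\Psi_E^*(z/\|z\|)$, and since $z/\|z\|\in S$ we get $\Psi_E^*(z)\le\|z\|\,M\le rM$ for every $z\in rB$, while along the ray through any near-maximizer of $\Psi_E^*$ on $S$ the values approach $rM$; hence $\sup_{\|\zeta\|\le r}\Psi_E^*(\zeta)=rM$. From this the two inclusions follow at once. If $rM<1$, then $\Psi_E^*(\zeta)\le rM<1$ for all $\zeta\in rB$, so $rB\subset\widehat E$, which shows $\sup\{r:rB\subset\widehat E\}\ge 1/M$. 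Conversely, if $rM>1$ there is a point $\zeta_0$ with $\|\zeta_0\|=r$ and $\Psi_E^*(\zeta_0)>1$, so $\zeta_0\in rB\setminus\widehat E$ and $rB\not\subset\widehat E$, which shows $\sup\{r:rB\subset\widehat E\}\le 1/M$.

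Combining the two inclusions gives $\sup\{r\in\R_+:rB\subset\widehat E\}=1/M=\varrho^h(E)$, as asserted; the degenerate cases are consistent with this, since for $M=+\infty$ (the pluripolar case) the admissible set of radii reduces to $\{0\}$ with supremum $0=\varrho^h(E)$, and for $M=0$ one has $\widehat E=\C^n$ so that every radius is admissible and both sides equal $+\infty$. The only step that is genuinely more than formal is the inheritance of absolute homogeneity by the regularized function $\Psi_E^*$, on which the reduction from the ball to the defining supremum over the sphere entirely rests. The boundary value $r=1/M$ (i.e.\ $rM=1$) may look like an obstacle — whether $(1/M)B\subset\widehat E$ holds depends on whether the maximum $M$ is actually attained on $S$ — but this ambiguity is harmless, because in either case the supremum of the admissible radii equals $1/M$, so no appeal to attainment of $M$ is needed for the final identity.
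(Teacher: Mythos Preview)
The paper does not give its own proof of this theorem: it is quoted as a result of Siciak (\cite{Sic:1982}, Th.~1.10), with no argument supplied. So there is nothing in the paper to compare your proposal against.

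That said, your argument is correct. The key observation---that $\Psi_E^*$ inherits absolute homogeneity of degree $1$ from $\Psi_E$, so that $\sup_{rB}\Psi_E^*=r\sup_S\Psi_E^*=rM$---reduces the claim to the tautology $\sup\{r:rM<1\}=1/M$. Your treatment of the boundary case $r=1/M$ is also correct: whether or not $(1/M)B\subset\widehat E$ depends on attainment of $M$, but the supremum is $1/M$ regardless. One small remark: for a compact set $E$ the case $M=0$ cannot actually occur (take any linear form $p(z)=z_j/R$ with $R$ large enough that $|p|\le 1$ on $E$ to see $\Psi_E\ge|\zeta_j|/R$), so your last degenerate case is vacuous, though harmless to include.
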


Since all norms on $\C^n$ are equivalent the property of
having  zero or strictly positive homogeneous  capacity 
is independent of the choice of norm.
We have ${\mathbb S}^{n-1} \subset
\{z\in \C^n \,;\, 
\log|z_1^2+\cdots+z_n^2-1|=-\infty\}$,
so $\varrho({\mathbb  S}^{n-1})=0$.
The following was proved by Korevaar \cite{Kor:1986}.

\begin{theorem}
  \label{th:2.2}
For every compact subset $E$ of\,    
$\, {\mathbb S}^{n-1}$ with non-empty relative interior $\varrho^h(E)>0$. 
\end{theorem}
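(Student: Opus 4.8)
The plan is to reduce this homogeneous problem on the sphere to the ordinary (inhomogeneous) extremal function of a solid real ball in one dimension lower, where non-pluripolarity is classical. Since positivity of $\varrho^h(E)$ is independent of the norm, I work throughout with the euclidean norm $|\cdot|$, and I use that $\varrho^h(E)>0$ is equivalent to $\sup_{|\zeta|=1}\Psi_E^*(\zeta)<\infty$. First I would normalize: because $E$ has non-empty relative interior, after an orthogonal rotation of $\R^n$ (an isometry that permutes ${\mathcal P}^h(\C^n)$ and hence leaves $\Psi_E$ unchanged) I may assume that $E$ contains a closed spherical cap $K=\{\omega\in{\mathbb S}^{n-1}\,;\,\omega_n\ge c\}$ for some $c\in(0,1)$. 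Writing $\omega=(\omega',\omega_n)$ and projecting $\omega\mapsto x'=\omega'/\omega_n$, the cap $K$ maps onto the solid closed ball $U'=\{x'\in\R^{n-1}\,;\,|x'|\le r\}$ with $r=\sqrt{1-c^2}/c$, since $|x'|^2=(1-\omega_n^2)/\omega_n^2$ sweeps out $[0,r^2]$ while the direction of $\omega'$ is arbitrary.

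The central step is a dehomogenization. Let $p\in{\mathcal P}^h(\C^n)$ with $k=\deg p\ge1$ and $|p|^{1/k}\le 1$ on $E$, and set $q(w)=p(w,1)$ for $w\in\C^{n-1}$, a polynomial of degree $\le k$. Homogeneity gives $p(\zeta',\zeta_n)=\zeta_n^{\,k}q(\zeta'/\zeta_n)$ for $\zeta_n\ne0$. For $\omega\in K$ one has $\omega_n\ge c$, so $|q(\omega'/\omega_n)|=|p(\omega)|/\omega_n^{\,k}\le c^{-k}$, and thus $|q|\le c^{-k}$ on all of $U'$. Now $U'$ has non-empty interior in $\R^{n-1}\subset\C^{n-1}$, hence is non-pluripolar, so $\Phi_{U'}\in\L(\C^{n-1})$; since $U'\subset\{|w|\le r\}$ one has the explicit bound $\Phi_{U'}(w)\le A\max(1,|w|)$ with $A=\max(1,1/r)$, and in particular $\Phi_{U'}\ge1$. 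Applying the definition of $\Phi_{U'}$ to $q/\|q\|_{U'}$ (the Bernstein--Walsh--Siciak inequality) and using $\deg q\le k$ gives $|q(w)|\le\|q\|_{U'}\,\Phi_{U'}(w)^{\deg q}\le c^{-k}\Phi_{U'}(w)^{k}$, that is $|q(w)|^{1/k}\le c^{-1}\Phi_{U'}(w)$ for every $w\in\C^{n-1}$.

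It then remains to assemble the estimate and pass to the regularization. For $\zeta_n\ne0$,
$$
|p(\zeta)|^{1/k}=|\zeta_n|\,|q(\zeta'/\zeta_n)|^{1/k}
\le\frac{|\zeta_n|}{c}\,\Phi_{U'}(\zeta'/\zeta_n)
\le\frac{A}{c}\,|\zeta_n|\max\!\big(1,|\zeta'/\zeta_n|\big)
=\frac{A}{c}\max\big(|\zeta_n|,|\zeta'|\big)\le\frac{A}{c}\,|\zeta|.
$$
This bound is uniform in $p$ and in $k$, so taking the supremum over all admissible $p$ yields $\Psi_E(\zeta)\le (A/c)|\zeta|$ on the dense set $\{\zeta_n\ne0\}$. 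Because the right-hand side is continuous, the inequality survives the passage to the upper semicontinuous regularization, giving $\Psi_E^*(\zeta)\le(A/c)|\zeta|$ on all of $\C^n$. Hence $\sup_{|\zeta|=1}\Psi_E^*(\zeta)\le A/c<\infty$, so $\varrho^h(E)\ge c/A>0$, and norm-independence finishes the proof.

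I expect the two delicate points to be, first, verifying that the cap genuinely projects onto a set with non-empty interior in $\R^{n-1}$, which is what licenses the classical non-pluripolarity of a solid ball and the finiteness of $\Phi_{U'}$; and second, ensuring that the estimate, derived only on $\{\zeta_n\ne0\}$, remains valid after taking $\Psi_E^*$ across the hyperplane $\{\zeta_n=0\}$. The latter is precisely where one must be careful: it works only because the argument produces the uniform linear bound $(A/c)|\zeta|$ rather than a merely finite pointwise bound that could blow up as $\zeta_n\to0$.
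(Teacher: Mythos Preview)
The paper does not give its own proof of this result; it simply records the statement and attributes it to Korevaar \cite{Kor:1986}. So there is nothing to compare your argument against within the paper itself.

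Your reduction---rotate so that $E$ contains a spherical cap around $e_n$, dehomogenize $p$ to $q(w)=p(w,1)$, and then invoke the finiteness of the ordinary Siciak extremal function $\Phi_{U'}$ on the solid real ball $U'\subset\R^{n-1}$---is sound in outline and yields a correct proof. There is, however, one genuine slip. The inclusion $U'\subset\{|w|\le r\}$ that you invoke goes the wrong way for what you want: monotonicity of the extremal function gives $\Phi_{U'}\ge\Phi_{\{|w|\le r\}}=\max(1,|w|/r)$, a \emph{lower} bound (which is indeed why $\Phi_{U'}\ge1$), not the upper bound $\Phi_{U'}(w)\le A\max(1,|w|)$ with your explicit $A=\max(1,1/r)$. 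That explicit constant is in fact false already for $n-1=1$, $r=1$: for the interval $[-1,1]$ one has $\Phi_{[-1,1]}(z)=|z+\sqrt{z^2-1}|\sim 2|z|$ as $|z|\to\infty$, so $A=1$ cannot work.

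The repair is easy and you have already written the correct ingredient: since $U'$ has non-empty interior in $\R^{n-1}$ it is non-pluripolar, hence $V_{U'}^*\in\L(\C^{n-1})$, which means $V_{U'}(w)\le V_{U'}^*(w)\le \log^+|w|+c$ for some constant $c$ depending only on $U'$. Exponentiating gives $\Phi_{U'}(w)\le A\max(1,|w|)$ with $A=e^{c}$. With this (unspecified but finite) $A$ your chain of inequalities goes through unchanged and yields $\Psi_E^*(\zeta)\le (A/c)|\zeta|$, hence $\varrho^h(E)\ge c/A>0$. The remaining points you flag---that the cap projects onto all of $U'$ and that the estimate on $\{\zeta_n\ne0\}$ passes to $\Psi_E^*$ by continuity of the majorant---are handled correctly.
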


Let $E\subset \C^n$ and 
let  $\T$ denote the unit circle in $\C$.
We define then {\it circular hull} of $E$ by  
$E_c=\T E=\{tz\,;\, t\in \T, z\in E\}$, 
and we say that $E$ is {\it circular} if $E_c=E$.
Since  $\sup_{E_c}|p|=\sup_E|p|$ for every homogeneous
polynomial $p$, it is clear that $\Psi_{E_c}=\Psi_E$.
Furthermore, if $E_c$ is non-pluripolar
then
$\Psi_E^*\in \PSH(\C^n)$ 
and $\Psi_E^*$ is absolutely homogeneous of degree $1$,  
i.e.,
$$
\Psi_E^*(z\zeta)=|z| \Psi_E^*(\zeta), \qquad \zeta\in \C^n, \ z\in \C.
$$
The following is a result of Siciak \cite{Sic:1981}

\begin{theorem} \label{th:2.3}
If $E$ is circular compact set, then its  polynomial hull is
$$
\widehat E=\{\zeta\in \C^n\,;\, \Psi_E(\zeta)\leq 1 \}.
$$
\end{theorem}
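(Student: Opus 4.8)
The plan is to prove the two inclusions separately; the inclusion $\widehat E\subseteq\{\Psi_E\le 1\}$ is immediate, while the reverse inclusion is the substance of the theorem and is where circularity enters. Recall that the polynomial hull is $\widehat E=\{\zeta\in\C^n : |p(\zeta)|\le\|p\|_E \text{ for all } p\in\mathcal P(\C^n)\}$, writing $\|p\|_E=\sup_{z\in E}|p(z)|$. If $\zeta\in\widehat E$, then in particular $|q(\zeta)|\le\|q\|_E$ for every homogeneous $q$; applying this to those $q\in\mathcal P^h(\C^n)$ of degree $k\ge 1$ with $\|q\|_E\le 1$ and taking the supremum of $|q(\zeta)|^{1/k}$ gives $\Psi_E(\zeta)\le 1$. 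This step uses no hypothesis on $E$.

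For the reverse inclusion, suppose $\Psi_E(\zeta)\le 1$. First I would upgrade the defining inequality by homogeneity: for an arbitrary homogeneous $q$ of degree $k\ge 1$, replacing $q$ by $q/\|q\|_E$ (with a limiting argument in $c q$, $c\to\infty$, when $\|q\|_E=0$) turns $\Psi_E(\zeta)\le 1$ into the unweighted bound $|q(\zeta)|\le\|q\|_E$, valid for every homogeneous $q$. The real task is then to pass from homogeneous polynomials to arbitrary ones, i.e.\ to show $|p(\zeta)|\le\|p\|_E$ for all $p\in\mathcal P(\C^n)$; this is where I expect the main difficulty, since the naive decomposition $p=\sum_k p_k$ into homogeneous parts only yields $|p(\zeta)|\le\sum_k|p_k(\zeta)|\le(1+\deg p)\|p\|_E$, losing a degree-dependent factor.

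Circularity dissolves this obstacle through an averaging identity combined with a power trick. Because $E=\T E$, for any polynomial $P$ and any $z\in E$ the homogeneous component $P_j$ satisfies $P_j(z)=\frac{1}{2\pi}\int_0^{2\pi}P(e^{i\theta}z)e^{-ij\theta}\,d\theta$, since $P(e^{i\theta}z)=\sum_j e^{ij\theta}P_j(z)$ is a finite Fourier series; as $e^{i\theta}z\in\T z\subseteq E$, this forces $\|P_j\|_E\le\|P\|_E$ for every $j$. Applying this to $P=p^N$ and invoking the homogeneous bound above gives $|(p^N)_j(\zeta)|\le\|(p^N)_j\|_E\le\|p\|_E^{N}$ for every $j$, with $d=\deg p$, so that
\[
|p(\zeta)|^N=\Big|\sum_{j=0}^{Nd}(p^N)_j(\zeta)\Big|\le(Nd+1)\,\|p\|_E^{N}.
\]
Taking $N$-th roots and letting $N\to\infty$ kills the subexponential factor $(Nd+1)^{1/N}\to 1$ and yields $|p(\zeta)|\le\|p\|_E$, hence $\zeta\in\widehat E$. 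The only delicate points to check are the averaging identity itself and the handling of the constant term $j=0$: since one cannot assume $0\in E$ for a circular set, the bound on $(p^N)_0=p(0)^N$ must be obtained from the same averaging (it equals its own average over $\T z$) rather than by evaluation at the origin.
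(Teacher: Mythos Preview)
Your argument is correct. The averaging identity on circular sets gives $\|P_j\|_E\le\|P\|_E$ for every homogeneous component, and the power trick $P=p^N$ followed by the $N$-th root eliminates the factor $(Nd+1)$; your treatment of the degree-$0$ term via the same averaging (rather than by evaluating at the origin) is also the right way to close that gap, since a nonempty $E$ makes $\|(p^N)_0\|_E=|(p^N)_0|$.

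As for comparison: the paper does not supply a proof of this theorem. It is quoted as a result of Siciak with a reference to \cite{Sic:1981}, and the exposition moves on immediately to Proposition~2.4. So there is no in-paper argument to compare your approach against. What you have written is essentially the classical proof (Cauchy/Fourier extraction of homogeneous components on circular sets combined with the Bernstein--Walsh power trick), and it would serve perfectly well as a self-contained justification here.
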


There are very few explicit formulas for $\Psi_E$. 
The most important is:

\begin{proposition} \label{prop:2.4}
If $E$ is the unit ball for a complex norm $\|\boldcdot\|$ on
$\C^n$, then
$$
\Psi_E(\zeta)=\|\zeta\|, \qquad \zeta\in \C^n.
$$  
\end{proposition}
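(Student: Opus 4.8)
The plan is to establish the two inequalities $\Psi_E\leq\|\cdot\|$ and $\Psi_E\geq\|\cdot\|$ separately; both follow quickly from the defining supremum together with the homogeneity built into ${\mathcal P}^h(\C^n)$.

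First I would prove the upper bound. Let $p\in{\mathcal P}^h(\C^n)$ be homogeneous of degree $k\geq 1$ with $|p|\leq 1$ on the unit ball $E=\{\zeta\in\C^n\,;\,\|\zeta\|\leq 1\}$. For $\zeta\neq 0$ write $\zeta=\|\zeta\|\,\eta$ with $\|\eta\|=1$, so $\eta\in E$. Homogeneity then gives $|p(\zeta)|=\|\zeta\|^k\,|p(\eta)|\leq\|\zeta\|^k$, hence $|p(\zeta)|^{1/k}\leq\|\zeta\|$; the case $\zeta=0$ is trivial since $p(0)=0$. Taking the supremum over all admissible $p$ yields $\Psi_E(\zeta)\leq\|\zeta\|$ for every $\zeta\in\C^n$.

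For the lower bound I would exhibit a single admissible polynomial of degree one that already realizes the value $\|\zeta\|$. Fix $\zeta\neq 0$. By the complex Hahn--Banach theorem there is a complex-linear functional $\ell$ on $\C^n$ with $\ell(\zeta)=\|\zeta\|$ and dual norm $\sup_{\|\omega\|\leq 1}|\ell(\omega)|=1$. Such an $\ell$ is a homogeneous polynomial of degree $k=1$ satisfying $|\ell|\leq 1$ on $E$, so it is admissible in the defining supremum, and $|\ell(\zeta)|^{1/1}=\|\zeta\|$. Therefore $\Psi_E(\zeta)\geq\|\zeta\|$, while for $\zeta=0$ both sides vanish. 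Combining the two bounds gives $\Psi_E=\|\cdot\|$.

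There is no serious obstacle here: the only point requiring care is the observation that linear functionals alone already saturate the supremum, which is exactly what the norming functional supplied by Hahn--Banach provides; polynomials of higher degree cannot improve on this because the upper bound already matches. I would also remark that, since the norm $\|\cdot\|$ is continuous, it coincides with its own upper semicontinuous regularization, so the identity $\Psi_E=\|\cdot\|$ shows in particular that $\Psi_E$ is continuous and $\Psi_E^*=\Psi_E$ in this case.
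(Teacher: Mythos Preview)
Your argument is correct. The paper, however, does not supply a proof of Proposition~\ref{prop:2.4}; it is simply recorded as a known fact before moving on to Proposition~\ref{prop:2.5}. Your two-inequality approach---homogeneity for the upper bound and a norming functional from the complex Hahn--Banach theorem for the lower bound---is the standard way to verify this identity, and every step is justified. The concluding remark that $\Psi_E^*=\Psi_E$ is a pleasant bonus not mentioned in the paper.
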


If  $\|\boldcdot\|$ is a norm on $\R^n$,   then the largest complex
norm on $\C^n$ which extends $\|\boldcdot\|$ is the {\it cross norm}
$\|\boldcdot\|_c$.  It is given by the formula
$$
\|\zeta\|_c=\inf\{\sum_{j=1}^N|\alpha_j|\|\omega_j\|\,;\, 
\zeta=\sum_{j=1}^N \alpha_j\omega_j, \alpha_j\in \C, \omega_j\in \R^n\}.
$$
For a proof of the following result see Siciak \cite{Sic:1961}
and Dru{\.z}kowski \cite{Dru:1974}.
 
\begin{proposition}
  \label{prop:2.5}
If $E\subset \R^n$ is the unit sphere  in the norm $\|\boldcdot\|$, then
$$
\|\zeta\|_c \leq \Psi_E(\zeta), \quad \zeta\in \C^n \text{ and } \quad
\|\zeta\|_c =\Psi_E(\zeta), \quad \zeta\in \C\R^n
$$
\end{proposition}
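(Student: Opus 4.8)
The plan is to read off both statements from Proposition~\ref{prop:2.4} (which computes $\Psi$ of the unit ball of a complex norm) together with the elementary monotonicity of $\Psi_E$ in the set $E$, and then to settle the equality by a one-line estimate on homogeneous polynomials. Two standing facts are used throughout. First, since the admissibility condition $|p|^{1/k}\le 1$ on a set $F$ becomes \emph{more} restrictive as $F$ grows, the assignment $F\mapsto\Psi_F$ is order-reversing: $E\subseteq F$ forces $\Psi_F\le\Psi_E$. Second, $\Psi_E$ is absolutely homogeneous of degree $1$, as already recorded in the text.

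For the inequality $\|\zeta\|_c\le\Psi_E(\zeta)$ on all of $\C^n$, I would let $B_c=\{\zeta\in\C^n:\|\zeta\|_c\le 1\}$ be the closed unit ball of the cross norm. Because $\|\cdot\|_c$ extends $\|\cdot\|$, every point $x$ of the real unit sphere $E$ satisfies $\|x\|_c=\|x\|=1$, so $E\subseteq B_c$. Applying Proposition~\ref{prop:2.4} to the complex norm $\|\cdot\|_c$ gives $\Psi_{B_c}=\|\cdot\|_c$, and monotonicity then yields $\|\cdot\|_c=\Psi_{B_c}\le\Psi_E$, which is the first assertion.

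For the equality on $\C\R^n$ only the reverse inequality remains, and by homogeneity it is enough to handle real points. For $\omega\in\R^n\setminus\{0\}$ and any admissible $p\in{\mathcal P}^h(\C^n)$ of degree $k$ (so $\sup_E|p|\le 1$), writing $\omega=\|\omega\|\,(\omega/\|\omega\|)$ with $\omega/\|\omega\|\in E$ gives $|p(\omega)|=\|\omega\|^{k}\,|p(\omega/\|\omega\|)|\le\|\omega\|^{k}$, hence $|p(\omega)|^{1/k}\le\|\omega\|$; taking the supremum over all such $p$ produces $\Psi_E(\omega)\le\|\omega\|=\|\omega\|_c$. Combined with the first assertion this pins down $\Psi_E(\omega)=\|\omega\|_c$ for every $\omega\in\R^n$. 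A general $\zeta\in\C\R^n$ has the form $\zeta=t\omega$ with $t\in\C$ and $\omega\in\R^n$, and the degree-one homogeneity of both $\Psi_E$ and $\|\cdot\|_c$ propagates the equality, $\Psi_E(t\omega)=|t|\,\Psi_E(\omega)=|t|\,\|\omega\|_c=\|t\omega\|_c$, completing the proof.

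I do not expect a serious obstacle: the whole argument rests on Proposition~\ref{prop:2.4} and on the extension property of the cross norm, both already available. The only point genuinely worth a comment is \emph{why} the equality is confined to $\C\R^n$ and generally fails elsewhere: the hypothesis $\sup_E|p|\le 1$ controls a homogeneous polynomial $p$ only on real scalar multiples of $E$, so the decisive estimate $|p(\zeta)|\le\|\zeta\|_c^{\,k}$ is at hand precisely when $\zeta$ lies on a complex line through a real direction, that is, when $\zeta\in\C\R^n$; for other directions $\Psi_E$ may strictly exceed the cross norm.
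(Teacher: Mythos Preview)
Your argument is correct. The paper does not actually supply its own proof of Proposition~\ref{prop:2.5}; it simply refers the reader to Siciak and Dru{\.z}kowski, so there is no in-text argument to compare against. Your route via Proposition~\ref{prop:2.4} applied to the cross-norm ball, together with the monotonicity $E\subseteq B_c\Rightarrow\Psi_{B_c}\le\Psi_E$, is the natural one, and the reverse inequality on $\R^n$ (hence on $\C\R^n$ by homogeneity) via $|p(\omega)|=\|\omega\|^{k}\,|p(\omega/\|\omega\|)|\le\|\omega\|^{k}$ is exactly right.
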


If the norm $\|\cdot\|$ in $\R^n$ is given by an inner product,
then we have equality:

\begin{theorem} \label{th:2.6} If $x,\xi\mapsto \scalar x\xi$ 
is an inner product on $\R^n$, $\xi\mapsto |\xi|=\scalar \xi\xi^{\frac 12}$
is the corresponding norm and $E=\{\xi\in \R^n\,;\, |\xi|=1\}$ 
is the unit sphere, then 
$\Psi_E=|\cdot|_c$.  
 If $\zeta=\xi+i\eta=e^{i\theta}(a+ib)$, 
$\theta\in \R$,  $\xi,\eta,a,b\in \R^n$, 
$\scalar ab=0$, and $|b|\leq |a|$, then 
\begin{equation*}
  |a|=\tfrac 1{\sqrt 2}\big(|\zeta|^2+|\scalar \zeta\zeta|\big)^{\frac 12}
\quad \text{ and } \quad
  |b|=\tfrac 1{\sqrt 2}\big(|\zeta|^2-|\scalar \zeta\zeta|\big)^{\frac 12},
\end{equation*}
and 
\begin{align*}
|\zeta|_c&=|a|+|b|
= \big(|\zeta|^2-d(\zeta,\C \R^n)^2\big)^{\frac 12}+d(\zeta,\C \R^n)\\
&= \big(|\zeta|^2 +\big(
|\zeta|^4-|\scalar \zeta\zeta|^2\big)^{\frac 12}\big)^{\frac 12}
\\
&=\big(|\zeta|^2 +2\big(|\xi|^2|\eta|^2-\scalar \xi\eta^2\big)^{\frac
  12}\big)^{\frac 12}.
\end{align*}
As a consequence we have $|\zeta|\leq |\zeta|_c\leq \sqrt 2|\zeta|$
for every $\zeta\in \C^n$, we have $|\zeta|=|\zeta|_c$ if and only if
$\zeta\in \C\R^n$, and we have $|\zeta|_c=\sqrt 2 |\zeta|$ if and
only if $\scalar \zeta\zeta=\zeta_1^2+\cdots+\zeta_n^2=0$.
\end{theorem}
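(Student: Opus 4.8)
The plan is to sandwich $\Psi_E$ between $|\cdot|_c$ from below (Proposition \ref{prop:2.5}) and $|\cdot|_c$ from above, and to evaluate $|\cdot|_c$ explicitly. First I would record the canonical decomposition: writing $\zeta=\xi+i\eta$ and $a=\xi\cos\theta+\eta\sin\theta$, $b=\eta\cos\theta-\xi\sin\theta$, the condition $\scalar ab=0$ becomes $\scalar\xi\eta\cos2\theta+\tfrac12(|\eta|^2-|\xi|^2)\sin2\theta=0$, which always has a solution $\theta$; replacing $\theta$ by $\theta+\pi/2$ if necessary we may assume $|b|\le|a|$. Since $a\perp b$, a direct computation gives $|\zeta|^2=|a|^2+|b|^2$ and $\scalar\zeta\zeta=e^{2i\theta}(|a|^2-|b|^2)$, so $|\scalar\zeta\zeta|=|a|^2-|b|^2$, and the stated formulas for $|a|$ and $|b|$ follow by adding and subtracting. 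The remaining displayed expressions for $|\zeta|_c$ are then pure algebra once we know $|\zeta|_c=|a|+|b|$ together with $d(\zeta,\C\R^n)=|b|$ (the point $e^{i\theta}a\in\C\R^n$ realizes the distance, the residual $e^{i\theta}ib$ being hermitian-orthogonal to $\C a$).

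For the lower bound I would argue by hand. Assuming $a,b\neq0$, set $u=a/|a|$ and $v=b/|b|$, an orthonormal pair, and $g=u-iv\in\C^n$. For every real unit vector $\omega$ we have $|\scalar\omega g|^2=\scalar\omega u^2+\scalar\omega v^2\le|\omega|^2=1$, while $\scalar\zeta g=e^{i\theta}(|a|+|b|)$. Hence for any representation $\zeta=\sum_j\alpha_j\omega_j$ with $|\omega_j|=1$ we get $|a|+|b|=|\scalar\zeta g|\le\sum_j|\alpha_j|\,|\scalar{\omega_j}g|\le\sum_j|\alpha_j|$, and taking the infimum yields $|\zeta|_c\ge|a|+|b|$; the trivial representation $\zeta=e^{i\theta}a+e^{i(\theta+\pi/2)}b$ gives the reverse inequality, so $|\zeta|_c=|a|+|b|$.

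The heart of the matter, and the step I expect to be the main obstacle, is the upper bound $\Psi_E(\zeta)\le|a|+|b|$, a Bernstein--Walsh type estimate. Given a homogeneous $p$ of degree $k$ with $|p|\le1$ on $E$, I would restrict it to the complex two-plane $W=\C u\oplus\C v$: since $(\cos t)u+(\sin t)v$ runs through real unit vectors, the restriction $q(z_1,z_2)=p(z_1u+z_2v)$ is homogeneous of degree $k$ and satisfies $|q|\le1$ on the real unit circle of $\R^2$. Writing $\cos t=\tfrac12(s+s^{-1})$, $\sin t=\tfrac1{2i}(s-s^{-1})$ with $s=e^{it}$ and using homogeneity, one finds $q(\cos t,\sin t)=s^{-k}\tilde q(s^2)$, where $\tilde q(\mu)=q\big(\tfrac{\mu+1}2,\tfrac{\mu-1}{2i}\big)$ has degree $\le k$; hence $|\tilde q|\le1$ on $\T$ and, by the maximum principle, on $\overline{\D}$. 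Finally $\zeta\in W$ has coordinates $z_1=\scalar\zeta u=e^{i\theta}|a|$ and $z_2=\scalar\zeta v=ie^{i\theta}|b|$, and the identity $(|a|,i|b|)=(|a|+|b|)\big(\tfrac{\lambda+1}2,\tfrac{\lambda-1}{2i}\big)$ with $\lambda=\tfrac{|a|-|b|}{|a|+|b|}\in[0,1]$ gives $|q(\zeta)|=|q(|a|,i|b|)|=(|a|+|b|)^k|\tilde q(\lambda)|\le(|a|+|b|)^k$. Taking the supremum over $p$ and the $k$-th root yields $\Psi_E(\zeta)\le|a|+|b|$.

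Combining Proposition \ref{prop:2.5} with the two bounds gives $|a|+|b|\le|\zeta|_c\le\Psi_E(\zeta)\le|a|+|b|$, so $\Psi_E=|\cdot|_c=|a|+|b|$ everywhere; the degenerate cases $a=0$ or $b=0$ are already covered by Proposition \ref{prop:2.5}, since then $\zeta\in\C\R^n$. The concluding inequalities are then immediate: from $|\zeta|_c^2=(|a|+|b|)^2=|\zeta|^2+2|a||b|$ we read off $|\zeta|\le|\zeta|_c\le\sqrt2\,|\zeta|$, with the left equality exactly when $|b|=0$, i.e. $\zeta\in\C\R^n$, and the right equality exactly when $|a|=|b|$, i.e. $|\scalar\zeta\zeta|=0$.
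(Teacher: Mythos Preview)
The paper does not give its own proof of Theorem~\ref{th:2.6}; immediately after the statement it simply refers the reader to Dru{\.z}kowski and to Sigurdsson--Sn{\ae}bjarnarson. Your proposal therefore goes well beyond what the paper does: you supply a complete, self-contained argument, and it is correct. The sandwich $|a|+|b|\le|\zeta|_c\le\Psi_E(\zeta)\le|a|+|b|$ is exactly the right strategy, and your two-variable reduction for the upper bound---rewriting $q(\cos t,\sin t)=s^{-k}\tilde q(s^2)$ and invoking the maximum principle on $\overline{\D}$---is essentially the Dru{\.z}kowski argument in miniature, so your approach is in the spirit of the cited sources even though the paper itself omits the details.

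Two small points. First, your justification of $d(\zeta,\C\R^n)=|b|$ is too quick: saying that $e^{i\theta}ib$ is hermitian-orthogonal to $\C a$ only shows $e^{i\theta}a$ is the nearest point on the single complex line $\C a$, not on all of $\C\R^n$. You should note that for any real unit vector $\omega$ the squared distance from $\zeta$ to $\C\omega$ is $|a|^2+|b|^2-\scalar a\omega^2-\scalar b\omega^2$, and since $a\perp b$ the maximum of $\scalar a\omega^2+\scalar b\omega^2$ over $|\omega|=1$ is $|a|^2$, attained at $\omega=a/|a|$. Second, in the line ``$|q(\zeta)|=|q(|a|,i|b|)|$'' you mean $|p(\zeta)|$ on the left; $q$ lives on $\C^2$, and the equality $|p(\zeta)|=|q(|a|,i|b|)|$ comes from homogeneity after absorbing the phase $e^{ik\theta}$. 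Neither point affects the validity of the argument.
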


See Dru{\.z}kowski \cite{Dru:1974} and Sigurdsson and Sn{\ae}bjarnarson
\cite{SigSna:2018} for a proof.  
As a consequence of Theorems \ref{th:2.1} and \ref{th:2.6} 
we have:

\begin{corollary}
  \label{cor:2.7}  If $|\cdot|$ is a norm with respect to an inner
  product on $\R^n$, then the corresponding homogeneous capacity
of the unit sphere is $1/\sqrt 2$.
\end{corollary}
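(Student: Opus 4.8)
The plan is to read off $\varrho^h(E)$ directly from the geometric description in Theorem \ref{th:2.1}, using the explicit identification of $\Psi_E$ furnished by Theorem \ref{th:2.6}. First I would observe that by Theorem \ref{th:2.6} we have $\Psi_E=|\cdot|_c$ on all of $\C^n$; since the cross norm is a genuine continuous norm, $\Psi_E$ is already upper semicontinuous, so $\Psi_E^*=\Psi_E=|\cdot|_c$. Consequently the homogeneous hull is the open cross-norm unit ball, $\widehat E=\{z\in\C^n\,;\,|z|_c<1\}$.

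Next I would apply Theorem \ref{th:2.1} with $B$ the unit ball of the Hermitian norm $\zeta\mapsto|\zeta|=\scalar\zeta{\bar\zeta}^{1/2}$ induced by the given inner product, so that $\varrho^h(E)=\sup\{r\in\R_+\,;\,rB\subset\widehat E\}$. The containment $rB\subset\widehat E$ says precisely that $|z|_c<1$ for every $z$ with $|z|\leq r$, so the problem reduces to evaluating $\sup_{|z|\leq r}|z|_c$.

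Here I would invoke the two-sided estimate from Theorem \ref{th:2.6}, namely $|z|\leq|z|_c\leq\sqrt2\,|z|$, together with its sharpness: equality $|z|_c=\sqrt2\,|z|$ holds exactly when $\scalar zz=0$. For $n\geq2$ such nonzero isotropic vectors exist, for instance $z=\tfrac r{\sqrt2}(e_1+ie_2)$ satisfies $|z|=r$ and $\scalar zz=0$, whence $\sup_{|z|\leq r}|z|_c=\sqrt2\,r$ and this value is attained. Therefore $rB\subset\widehat E$ holds if and only if $\sqrt2\,r<1$, that is if and only if $r<1/\sqrt2$, and the supremum of the admissible radii is $1/\sqrt2$, giving $\varrho^h(E)=1/\sqrt2$.

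The only delicate point, hardly an obstacle, is keeping the strict inequality defining $\widehat E$ consistent with the attainment of the upper bound. Because $\sqrt2\,|z|$ is actually attained on $rB$, the threshold radius $r=1/\sqrt2$ is itself excluded from the admissible set even though it is the supremum. In writing this I would therefore separate the two cases explicitly: if $r<1/\sqrt2$ then $|z|_c\leq\sqrt2\,|z|\leq\sqrt2\,r<1$ for all $z\in rB$, so the inclusion holds; while at $r=1/\sqrt2$ the isotropic vector $z$ above lies in $rB$ with $|z|_c=\sqrt2\,r=1$, violating the strict inequality, so the inclusion fails. (For $n=1$ one has $|\zeta|_c=|\zeta|$ and the capacity is $1$ instead, so the statement is understood for $n\geq2$.)
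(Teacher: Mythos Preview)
Your proof is correct and follows exactly the route the paper indicates: it is presented there simply as ``a consequence of Theorems \ref{th:2.1} and \ref{th:2.6}'', and you have supplied precisely the details of that deduction, including the careful handling of the strict inequality in $\widehat E$ and the $n\geq 2$ caveat. One could also bypass Theorem \ref{th:2.1} and use the defining formula $\varrho^h(E)=\big(\sup_{|\zeta|=1}\Psi_E^*(\zeta)\big)^{-1}$ directly, reading off $\sup_{|\zeta|=1}|\zeta|_c=\sqrt 2$ from Theorem \ref{th:2.6}; but your version matches the paper's stated argument.
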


For the case when $E$ is the unit ball with respect to a 
norm in $\R^2$ we have a very interesting result of 
Baran \cite{Bar:1999}:

\begin{proposition}
Let $\|\boldcdot\|$ be a norm on $\R^2$ with unit ball $E$, set 
$u(\xi)=\log\|(1,\xi)\|$ for $\xi$, and define
$$
{\mathcal P}u(z)=\dfrac{|\Im z|}\pi \int_\R
\dfrac{u(\xi)\, d\xi}{|z-\xi|^2}, \qquad z\in \C\setminus \R
$$
Then 
$$
\Psi_E(z_1,z_2)=
  |z_1|\exp\big({\mathcal P}u(z_2/z_1)\big), \qquad z\in \C^2, \ z_1\neq 0,
$$
and it extends uniquely to a continuous function on $\C^2$.
\end{proposition}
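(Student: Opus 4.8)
The plan is to reduce the two–variable statement to a one–variable Dirichlet problem on $\C\setminus\R$ using the absolute homogeneity of $\Psi_E$, and then to identify the resulting subharmonic function with the symmetric Poisson extension $\mathcal{P}u$. Since $E$ is the unit ball of a norm it contains a nonempty open subset of the maximal totally real subspace $\R^2\subset\C^2$, hence is non-pluripolar; so by the discussion of circular hulls in Section~\ref{sec:2} the regularization $\Psi_E^*$ lies in $\PSH(\C^2)$ and is absolutely homogeneous of degree $1$. Writing $\phi(w)=\log\Psi_E^*(1,w)$, the function $\phi$ is subharmonic on $\C$ (it is the restriction of the psh function $\log\Psi_E^*$ to the complex line $w\mapsto(1,w)$), and absolute homogeneity gives $\log\Psi_E^*(z_1,z_2)=\log|z_1|+\phi(z_2/z_1)$ for $z_1\neq0$. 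Thus the proposition is equivalent to the single identity $\phi=\mathcal{P}u$ on $\C$.

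First I would record three elementary properties of $\phi$. For the boundary values: using $\Psi_E=\Psi_{\partial E}$ (the supremum of a homogeneous polynomial over the ball is attained on the sphere) together with Proposition~\ref{prop:2.5}, for real $\xi$ the point $(1,\xi)\in\C\R^2$ satisfies $\Psi_E(1,\xi)=\|(1,\xi)\|_c=\|(1,\xi)\|$, the cross norm restricting to the norm on real vectors; the right-hand side being continuous forces $\Psi_E^*=\Psi_E$ there, so $\phi(\xi)=u(\xi)$. For symmetry: because $E\subset\R^2$ is real, conjugating the coefficients of a homogeneous polynomial preserves the defining constraint, whence $\Psi_E^*(\bar\zeta)=\Psi_E^*(\zeta)$ and $\phi(\bar w)=\phi(w)$, i.e. $\phi$ is even in $\Im w$. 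For the growth: homogeneity gives $\Psi_E^*(1,w)/|w|=\Psi_E^*(1/w,1)\to\Psi_E^*(0,1)=\|(0,1)\|$ as $|w|\to\infty$, so $\phi(w)=\log|w|+\log\|(0,1)\|+o(1)$ and $\phi\in\mathcal{L}$; the splitting $u(\xi)=\log|\xi|+(u(\xi)-\log|\xi|)$, together with the facts that the symmetric Poisson extension of $\log|\xi|$ is $\log|w|$ and that $u(\xi)-\log|\xi|\to\log\|(0,1)\|$ is bounded, yields the same asymptotics for $\mathcal{P}u$.

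The crux is to show that $\phi$ is harmonic on $\C\setminus\R$. By the homogeneity reduction one has $\phi=g^*$, where $g(w)=\sup\{\tfrac1k\log|q(w)|\}$ is the weighted one–variable extremal function for the constraint $|q(\xi)|\le\|(1,\xi)\|^k$ on $\R$; this constraint is obtained by writing a homogeneous $p$ of degree $k$ as $p(z)=z_1^k q(z_2/z_1)$ and reading $\sup_{\partial E}|p|\le1$ along the sphere, the direction $(0,1)$ entering as the limit as the real parameter tends to infinity. The Riesz mass of $g^*$ is the equilibrium measure of this weighted problem, whose external field is carried by $\R$; I expect to show that the measure itself is carried by $\R$ — equivalently, that the extremal polynomials may be chosen with only real zeros — so that $\phi=g^*$ is harmonic in each half–plane. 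This is the main obstacle: the weight $u=\log\|(1,\cdot)\|$ is \emph{not} convex (already for the Euclidean norm $u=\tfrac12\log(1+\xi^2)$ fails to be convex), so the required positivity of the normal–derivative jump $\partial_y\mathcal{P}u\big|_{y=0^+}$ is invisible termwise and must be extracted from the global Chebyshev/extremal structure of the constraint on $\R$.

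Granting harmonicity off $\R$, I would finish as follows. On the upper half–plane $\mathbb{H}=\{\Im w>0\}$ the difference $\phi-\mathcal{P}u$ is harmonic, extends continuously to $\R$ with vanishing boundary values (both equal $u$ there), and is bounded, since both functions share the asymptotics of the previous paragraph; a bounded harmonic function on $\mathbb{H}$ with zero boundary values vanishes, and by evenness $\phi=\mathcal{P}u$ on all of $\C$. This gives $\Psi_E^*(z_1,z_2)=|z_1|e^{\mathcal{P}u(z_2/z_1)}$ for $z_1\neq0$; as the right-hand side is continuous there, $\Psi_E$ is continuous and coincides with $\Psi_E^*$, proving the formula. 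Finally, as $z_1\to0$ with $z_2\neq0$ fixed the growth estimate gives $|z_1|e^{\mathcal{P}u(z_2/z_1)}\to|z_2|\,\|(0,1)\|=\Psi_E(0,z_2)$, so the formula extends continuously across $\{z_1=0\}$ and trivially to the origin, and continuity makes the extension unique.
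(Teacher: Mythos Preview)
The paper does not supply a proof of this proposition; it is quoted as a result of Baran \cite{Bar:1999} and the reader is referred there. So there is no argument in the paper to compare against, and the question is whether your outline stands on its own.

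Your reduction is sound: homogeneity collapses the problem to identifying $\phi(w)=\log\Psi_E^*(1,w)$ with $\mathcal{P}u(w)$, and the verification of boundary values via Proposition~\ref{prop:2.5}, of conjugation symmetry, and of the $\log|w|$ asymptotics is correct. The gap is exactly where you say it is, and it is genuine: you do not prove that $\phi$ is harmonic on $\C\setminus\R$. You rephrase this as showing the Riesz measure of the associated one-variable weighted extremal function is carried by $\R$, or that the extremal polynomials can be taken with only real zeros, but you then stop at ``I expect to show'' and flag it as the main obstacle. Without this step the argument collapses: a subharmonic function on $\mathbb{H}$ with boundary data $u$, the right growth at infinity, and no harmonicity hypothesis is not forced to equal the Poisson integral --- the Poisson integral is merely its \emph{harmonic majorant}, and the discrepancy is precisely the Green potential of whatever Riesz mass sits in $\mathbb{H}$. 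So the whole content of Baran's theorem is hidden in the step you have not carried out. Your observation that the weight $u=\log\|(1,\cdot)\|$ need not be convex is to the point: the harmonicity does not follow from soft potential-theoretic considerations and requires the specific structure of norm balls in $\R^2$ (in Baran's work, via an explicit Chebyshev-type construction). As written, the proposal is an accurate roadmap but not a proof.
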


As we have already mentioned there are only a few explicit examples of
homogeneous extremal functions.  
{\it Disc envelope formulas} are an alternative way of expressing
extremal functions, see
L\'arusson and Sigurdsson \cite{LarSig:2005,LarSig:2007,LarSig:2009},
Magn\'usson and Sigurdsson \cite{MagSig:2007}, and
Drinovec Drnov{\v s}ek and Sigurdsson \cite{DriSig:2016}.

We have only mentioned results on extremal functions that
are needed for proving our results.  For the general theory
see the works of 
Siciak \cite{Sic:1961,Sic:1962,Sic:1974,Sic:1981,Sic:1982},
Baran \cite{Bar:1998,Bar:1999,Bar:2009}, and
Klimek \cite{Kli:1991}.
Some interesting applications are given in
Korevaar \cite{Kor:1986,Kor:1986a}.
For a related capacity on projective spaces see
Alexander \cite{Ale:1981,Ale:1982}.

\section{Growth properties entire functions}
\label{sec:3}

\noindent
We say that a function  $f\in \OO(\C^n)$ is of {\it finite order}
if there exist positive constants $C$, $\sigma$ and $\varrho$ such 
that 
\begin{equation}
  \label{eq:3.1}
 |f(\zeta)|\leq Ce^{\sigma |\zeta|^\varrho}, \qquad \zeta\in \C^n,  
\end{equation}
and we define the {\it order} $\varrho_f$ of $f$ as the infimum over 
all $\varrho$ for which such an estimate exists.  If  $f$ is not of
finite order, then we say that $f$ is of {\it infinite order} and
define $\varrho_f=+\infty$. If we set
$M_f(r)=\sup_{|\zeta|\leq r} |f(\zeta)|$, 
then
$$
\varrho_f=\varlimsup_{r\to +\infty}\dfrac{\log\log M_f(r)}{\log r}.
$$
If $f$ is of finite order, then we say that $f$ is of {\it finite
type with respect to the growth order $\varrho$} 
if there exist positive constants $C$ and $\sigma$ 
such that (\ref{eq:3.1}) holds.
Then we define the type {\it type} $\sigma_f$ of
$f$ ({\it with respect to the order $\varrho$}) as the infimum over
all $\sigma$ for which (\ref{eq:3.1}) holds for some $C$.
We have
$$
\sigma_f=\varlimsup_{r\to +\infty}\dfrac{\log M_f(r)}{r^{\varrho}}.
$$
The function $f$ is said to be of {\it exponential type} if it is of
finite type with respect to the growth order $1$. 

\smallskip
For every $f\in\OO(\C^n)$ of finite type with respect to the 
order $\varrho$ we define the {\it indicator function}
$i_f$  by 
$$
i_f(\zeta)=\varlimsup_{t\to \infty}
\dfrac 1{t^{\varrho}}\log|f(t\zeta)|, 
$$
then its {\it upper semi-continuous regularization},
$$
i_f^*(\zeta)=\varlimsup_{\vartheta\to \zeta}
i_f(\vartheta),  \qquad \zeta \in \C^n,
$$
is plurisubharmonic and we have
$$
\sigma_f=\sup_{|\zeta|=1} i_f^*(\zeta).
$$

Let $\varphi\in \OO(\C)$ be given by 
$\varphi(z)=\sum_{k=0}^\infty c_kz^k$ and assume that
$|\varphi(z)|\leq Ce^{\sigma |z|^\varrho}$ for
all $z\in \C$, where $C$, $\varrho$, and $\sigma$ 
are positive constants.
Then Cauchy's inequalities give that for every $r>0$ we have
$$|c_k|=\dfrac{|\varphi^{(k)}(0)|}{k!}\leq C\dfrac{e^{\sigma r^\varrho}}{r^k},
\qquad k=0,1,2,\dots.
$$ 
The minimal value of the right hand side is taken for 
$r=(k/\sigma\varrho)^{1/\varrho}$, so we conclude that
\begin{equation}
  \label{eq:3.2}
|c_k|
\leq C\bigg(\dfrac{e\sigma\varrho}k\bigg)^{k/\varrho},
\qquad k\geq 0,
\end{equation}
(with the abuse of notation $(e\sigma\varrho/0)^{0/\varrho}=1$), 
and 
\begin{equation}
  \label{eq:3.3}
|\varphi(z)|\leq
C\sum_{k=0}^\infty\bigg(\dfrac{e\sigma\varrho}k\bigg)^{k/\varrho}
|z|^k, \qquad z\in \C.  
\end{equation}
By Levin \cite{Lev:1980}, Ch.~1., it is possible to express the 
order and type of any $\varphi\in \OO(\C)$  
in terms of the coefficients $c_k$ of its power series at
$0$. Its order $\varrho_\varphi\in [0,+\infty]$ is given 
by the formula
$$
\varrho_\varphi=\varlimsup_{k\to \infty}
\dfrac{k\log k}{-\log|c_k|}.
$$
If $\varrho_\varphi\in ]0,+\infty[$, then the type
$\sigma_\varphi\in [0,+\infty]$ with respect to 
$\varrho_\varphi$ is given by the equation
$$
\big(e \sigma_\varphi \varrho_\varphi\big)^{\tfrac 1\varrho_\varphi}
=\varlimsup_{k\to \infty} k^{\tfrac 1\varrho_\varphi}\root k \of {|c_k|}.
$$
These formulas tell us that for any given positive numbers
$\varrho$ and $\sigma$ the entire function 
\begin{equation}
  \label{eq:3.4}
z\mapsto \sum_{k=0}^\infty\bigg(\dfrac{e\sigma\varrho}k\bigg)^{k/\varrho}
z^k, \qquad z\in \C,
  \end{equation}
is of order $\varrho$ and type $\sigma$ with respect to $\varrho$.

\section{Extensions of holomorphic functions}
\label{sec:4}

\noindent
Our main result is 

\begin{theorem}
  \label{th:4.1}
Let $E$ be a compact subset of $\C^n$ and assume that
$E$ has positive homogeneous capacity in the sense of 
Siciak.
Let  $f\colon \C E \to \C$ be a function,
such that for every $\zeta\in E$ the function
$\C\ni z\mapsto f(z\zeta)$ is holomorphic and satisfies
\begin{equation}
  \label{eq:4.1}
  |f(z\zeta)|\leq Ce^{\sigma(\zeta)|z|^\varrho}, \qquad z\in \C, \ \zeta\in E, 
\end{equation}
with positive constants  $C$  and $\varrho$
and a function  $\sigma\colon E\to \R_+=\{ x\geq 0\}$, which is bounded above.
Assume that  for every $k=0,1,2,\dots$ there exists a 
$k$-homogeneous complex polynomial $P_k$ on $\C^n$ such that
\begin{equation}\label{eq:4.2}
  \dfrac 1{k!}\boldcdot \dfrac{d^k}{dz^k}f(z\zeta)\bigg|_{z=0}
=P_k(\zeta),
\qquad  k=0,1,2,\dots, \quad \zeta\in E.
\end{equation}
Then the series $\sum_{k=0}^\infty P_k$ converges locally uniformly
in $\C^n$ and gives a unique holomorphic
extension of $f$ to $\C^n$ by 
$f(\zeta)=\sum_{k=0}^\infty P_k(\zeta)$ for $\zeta\in \C^n$.
We have 
\begin{equation}
  \label{eq:4.3}
|f(\zeta)|\leq 
C\sum_{k=0}^\infty
\bigg(\dfrac{e\varrho}k\bigg)^{k/\varrho}
\Psi_{E,\gamma}(\zeta)^k, \qquad 
\zeta\in \C^n,
  \end{equation}
where $\Psi_{E,\gamma}$ is Siciak's weighted homogeneous
extremal  function with weight $\gamma=\sigma^{1/\varrho}$.
This  implies that for every $\varepsilon>0$ there exists
$C_\varepsilon>0$ such that
\begin{equation} 
  \label{eq:4.4}
|f(\zeta)|
\leq C_\varepsilon 
e^{\Psi_{E,\gamma}(\zeta)^\varrho+\varepsilon|\zeta|^\varrho},
\qquad \zeta\in \C^n.  
\end{equation}
Hence $f$ is of order $\leq \varrho$ and of type
$\leq \alpha^\varrho$ with respect to $\varrho$, where 
$\alpha=\sup_{|\zeta|=1}\Psi_{E,\gamma}^*(\zeta)$,
and $i_f\leq \Psi_{E,\gamma}^\varrho$.
\end{theorem}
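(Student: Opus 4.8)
The plan is to build the extension coefficient by coefficient and to control the homogeneous pieces $P_k$ by the extremal function. First I would fix $\zeta\in E$ and apply the one-variable estimates of Section~3 to the entire function $z\mapsto f(z\zeta)$, which by (\ref{eq:4.1}) has order $\le\varrho$ and type $\le\sigma(\zeta)$. Its Taylor coefficient of order $k$ at $z=0$ is, by the compatibility condition (\ref{eq:4.2}), exactly $P_k(\zeta)$, so Cauchy's inequalities in the form (\ref{eq:3.2}) give, with $\gamma=\sigma^{1/\varrho}$,
$$|P_k(\zeta)|\le C\Big(\frac{e\varrho}{k}\Big)^{k/\varrho}\gamma(\zeta)^k,\qquad \zeta\in E,\ k\ge1,$$
and $|P_0|=|f(0)|\le C$ for $k=0$ (consistent with the convention $(e\varrho/0)^{0/\varrho}=1$).

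The heart of the argument is to upgrade this pointwise bound on $E$ to a bound on all of $\C^n$. Writing $A_k=C(e\varrho/k)^{k/\varrho}$, the polynomial $A_k^{-1}P_k$ is $k$-homogeneous and satisfies $|A_k^{-1}P_k|^{1/k}\le\gamma$ on $E$; hence it is one of the competitors in the supremum defining $\Psi_{E,\gamma}$, and therefore $|A_k^{-1}P_k(\zeta)|^{1/k}\le\Psi_{E,\gamma}(\zeta)$ for every $\zeta\in\C^n$. This is precisely
$$|P_k(\zeta)|\le C\Big(\frac{e\varrho}{k}\Big)^{k/\varrho}\Psi_{E,\gamma}(\zeta)^k,\qquad \zeta\in\C^n.$$
Since $\sigma$, and hence $\gamma$, is bounded above by some $\gamma_m$, we have $\Psi_{E,\gamma}\le\gamma_m\Psi_E$, and the hypothesis of positive homogeneous capacity guarantees that $\Psi_E^*$ is finite; by absolute homogeneity of degree $1$ this yields $\Psi_{E,\gamma}(\zeta)\le\alpha|\zeta|$ with $\alpha=\sup_{|\zeta|=1}\Psi_{E,\gamma}^*(\zeta)<\infty$. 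Summing over $k$ and recognising the right-hand side as the entire function (\ref{eq:3.4}) with $\sigma=1$ (of order $\varrho$ and type $1$) evaluated at $\Psi_{E,\gamma}(\zeta)$, I obtain (\ref{eq:4.3}) together with local uniform convergence of $\sum_k P_k$, so the sum defines an entire function on $\C^n$. Because $P_k$ is $k$-homogeneous, $\sum_k P_k(z\zeta)=\sum_k P_k(\zeta)z^k$ is exactly the Taylor series of $z\mapsto f(z\zeta)$, so the extension agrees with $f$ on $\C E$.

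For uniqueness I note that positive homogeneous capacity makes $E$, and hence $\C E$, non-pluripolar; two entire extensions agree on $\C E$, and their difference, holomorphic and vanishing on a non-pluripolar set, is identically zero (the same remark shows the $P_k$ are determined by (\ref{eq:4.2})). To pass from (\ref{eq:4.3}) to (\ref{eq:4.4}) I use that the function (\ref{eq:3.4}) with $\sigma=1$ has type $1$ with respect to $\varrho$, so for each $\eta>0$ there is $C_\eta$ with $\sum_k(e\varrho/k)^{k/\varrho}r^k\le C_\eta e^{(1+\eta)r^\varrho}$; setting $r=\Psi_{E,\gamma}(\zeta)$ and choosing $\eta=\varepsilon\alpha^{-\varrho}$ absorbs the excess via $\eta\,\Psi_{E,\gamma}(\zeta)^\varrho\le\varepsilon|\zeta|^\varrho$, which is (\ref{eq:4.4}). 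Inserting $\Psi_{E,\gamma}(\zeta)\le\alpha|\zeta|$ into (\ref{eq:4.4}) gives order $\le\varrho$ and type $\le\alpha^\varrho$. Finally, for the indicator I use absolute homogeneity $\Psi_{E,\gamma}(t\zeta)=t\,\Psi_{E,\gamma}(\zeta)$ for $t>0$ in (\ref{eq:4.4}) to get $t^{-\varrho}\log|f(t\zeta)|\le t^{-\varrho}\log C_\varepsilon+\Psi_{E,\gamma}(\zeta)^\varrho+\varepsilon|\zeta|^\varrho$, and letting $t\to\infty$ and then $\varepsilon\to0$ yields $i_f\le\Psi_{E,\gamma}^\varrho$.

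The step I expect to be the crux is the upgrade from $E$ to $\C^n$ in the second paragraph: it must be checked that the rescaled polynomial $A_k^{-1}P_k$ is genuinely admissible in the definition of $\Psi_{E,\gamma}$, and, more importantly, that the positive-capacity hypothesis actually delivers a finite, absolutely homogeneous majorant $\Psi_{E,\gamma}(\zeta)\le\alpha|\zeta|$. Without positive homogeneous capacity the extremal function would be identically $+\infty$ and the bounds (\ref{eq:4.3})--(\ref{eq:4.4}) would be vacuous; it is exactly here that the geometric hypothesis on $E$ is indispensable.
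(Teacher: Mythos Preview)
Your argument is correct and follows the same route as the paper's proof: Cauchy estimates along each line give $|P_k|\le C(e\varrho/k)^{k/\varrho}\gamma^k$ on $E$, the definition of $\Psi_{E,\gamma}$ upgrades this to all of $\C^n$, and positive homogeneous capacity makes $\Psi_{E,\gamma}$ locally bounded so that the series converges to the desired extension with the stated growth. One small correction in your uniqueness remark: positive homogeneous capacity does \emph{not} force $E$ itself to be non-pluripolar (e.g.\ $E={\mathbb S}^{n-1}$ lies in $\{z_1^2+\cdots+z_n^2=1\}$ yet has $\varrho^h(E)=1/\sqrt 2$); the clean argument is that any entire $G$ vanishing on $\C E$ has each homogeneous Taylor component $G_k$ vanishing on $E$, and a nonzero such $G_k$ would force $\Psi_E\equiv+\infty$ off $\{G_k=0\}$, contradicting $\varrho^h(E)>0$.
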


\begin{proof} There is no condition ensuring convergence 
of the series  $\sum_{k=0}^\infty P_k$, so we show
that it is locally uniformly convergent.
For every $\zeta\in E$ we define $\varphi_\zeta\in
\OO(\C)$  by $\varphi_\zeta(z)=f(z\zeta)$ for $z\in \C$.  
By  (\ref{eq:4.1}) $\varphi_\zeta$ is of type $\leq \sigma(\zeta)$
with respect to the growth order $\varrho$, so 
(\ref{eq:3.2}) gives 
$$
|P_k(\zeta)|= 
\dfrac{|\varphi_\zeta^{(k)}(0)|}{k!} \leq 
C\bigg(\dfrac{e\sigma(\zeta)\varrho}k\bigg)^{k/\varrho}
=C\bigg(\dfrac{e\varrho}k\bigg)^{k/\varrho} \gamma(\zeta)^k, 
\qquad \zeta\in E.
$$
and by the definition of $\Psi_{E,\gamma}$ we have
\begin{equation}
  \label{eq:4.5}
{|P_k(\zeta)|}
\leq  C\bigg(\dfrac{e\varrho}k\bigg)^{k/\varrho} \Psi_{E,\gamma}(\zeta)^k,
\qquad \zeta\in \C^n.
\end{equation}
Since $\sigma$ is bounded above,
$\gamma \leq \gamma_m$  for some constant $\gamma_m$,
and we have $\Psi_{E,\gamma}\leq \gamma_m \Psi_E$.
Since $E$ has positive homogeneous capacity,
it now follows from Siciak's theorem that
both $\Psi_E^*$ and $\Psi_{E,\gamma}^*$ are 
plurisubharmonic and therefore
locally bounded above in $\C^n$.
Hence we conclude that $\sum_{k=0}^\infty P_k$ 
converges locally uniformly in 
$\C^n$ and from (\ref{eq:4.5}) we see that
 the limit defines a function 
$F\in \OO(\C^n)$ satisfying (\ref{eq:4.3}).
The function $F$ is an extension of $f$, for if $\zeta\in E$, then 
for every $z\in \C$
$$
F(z\zeta)=\sum_{k=0}^\infty P_k(z\zeta) 
=\sum_{k=0}^\infty P_k(\zeta)z^k
=\sum_{k=0}^\infty \dfrac{\varphi_\zeta^{(k)}(0)}{k!}z^k
=\varphi_\zeta(z)=f(z\zeta).
$$
Since the function (\ref{eq:3.4}) is 
of order $\varrho$ and of type $\sigma$ with respect to 
$\varrho$,  we even have (\ref{eq:4.4})
and  the inequality  $i_f\leq \Psi_{E,\gamma}^\varrho$
follows from homogeneity.
\end{proof}

It is interesting to state a special case of Theorem \ref{th:4.1} 
for functions of exponential type and
combine it with Theorem \ref{th:2.6} and Corollary \ref{cor:2.7}:

\begin{corollary}
  \label{cor:4.2} 
If the assumptions of Theorem \ref{th:4.1} are satisfied 
with $\varrho=1$ and $E\subset {\mathbb S}^{n-1}$, then
the extension $f$ satisfies
\begin{equation} 
  \label{eq:4.6}
|f(\zeta)|
\leq C_\varepsilon 
e^{\sigma_m|\zeta|_c+\varepsilon|\zeta|}
\leq C_\varepsilon 
e^{\sqrt 2\sigma_m|\zeta|+\varepsilon|\zeta|},
\qquad \zeta\in \C^n,  
\end{equation}
for every $\varepsilon>0$, 
where $\sigma_m=\sup_E\sigma$.  As a consequence 
we note that if the restriction of $f$ to the union
of lines $\C E\subseteq \C\R^n$ has type $\leq \sigma_m$, then
the extension is of type $\leq \sqrt 2\, \sigma_m$.
\end{corollary}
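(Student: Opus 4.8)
The plan is to specialize Theorem \ref{th:4.1} to $\varrho=1$ and then feed the output into the two facts about the Euclidean unit sphere recorded in Section \ref{sec:2}, namely Theorem \ref{th:2.6} and Corollary \ref{cor:2.7}. With $\varrho=1$ the weight is $\gamma=\sigma^{1/\varrho}=\sigma$, which is bounded above by $\sigma_m=\sup_E\sigma$, and the estimate (\ref{eq:4.4}) becomes
\begin{equation*}
|f(\zeta)|\leq C_\varepsilon\, e^{\Psi_{E,\sigma}(\zeta)+\varepsilon|\zeta|},\qquad \zeta\in\C^n .
\end{equation*}
So the whole corollary reduces to the pointwise inequality $\Psi_{E,\sigma}(\zeta)\leq\sigma_m|\zeta|_c$, together with the norm comparison $|\zeta|_c\leq\sqrt2|\zeta|$.

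For the first of these I would chain two of the elementary properties collected in Section \ref{sec:2}. Since $\sigma\leq\sigma_m$ on $E$, the scaling remark for weighted extremal functions gives $\Psi_{E,\sigma}\leq\sigma_m\Psi_E$. Next, $E$ is exactly the unit sphere of the Euclidean inner product, so Theorem \ref{th:2.6} identifies the unweighted extremal function with the cross norm, $\Psi_E=|\cdot|_c$. Combining the two yields $\Psi_{E,\sigma}\leq\sigma_m|\cdot|_c$, which is the first inequality in (\ref{eq:4.6}); the second inequality is the comparison $|\zeta|_c\leq\sqrt2|\zeta|$ from the last line of Theorem \ref{th:2.6}.

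The statement about the type is then read off (\ref{eq:4.6}): for every $\varepsilon>0$ one has $|f(\zeta)|\leq C_\varepsilon e^{(\sqrt2\sigma_m+\varepsilon)|\zeta|}$, so $f$ is of type at most $\sqrt2\sigma_m$ with respect to the order $1$. The same conclusion can be phrased through the indicator function, which is where Corollary \ref{cor:2.7} enters transparently: from $i_f\leq\Psi_{E,\sigma}$ and the homogeneity of $\Psi_E^*$ one gets $\sigma_f=\sup_{|\zeta|=1}i_f^*(\zeta)\leq\sigma_m\sup_{|\zeta|=1}\Psi_E^*(\zeta)=\sigma_m/\varrho^h(E)$, and $\varrho^h({\mathbb S}^{n-1})=1/\sqrt2$ by Corollary \ref{cor:2.7}.

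The step I would watch most carefully is the direction of the comparison $\Psi_E=|\cdot|_c$. The extremal function is monotone decreasing in the set, so the clean identity $\Psi_E=|\cdot|_c$, equivalently $\sup_{|\zeta|=1}\Psi_E^*=\sqrt2$, is available precisely because $E$ is the full sphere; for a proper compact $E\subseteq{\mathbb S}^{n-1}$ one instead has $\Psi_E\geq|\cdot|_c$, and the sharp pointwise bound in (\ref{eq:4.6}) must be weakened to $\Psi_{E,\sigma}(\zeta)\leq\sigma_m|\zeta|/\varrho^h(E)$ coming from the capacity. In other words, the whole corollary rests on the homogeneous capacity of the Euclidean unit sphere being $1/\sqrt2$, which is exactly the content obtained by combining Theorem \ref{th:2.6} and Corollary \ref{cor:2.7}.
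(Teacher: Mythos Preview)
Your argument is exactly the one the paper intends: the sentence introducing the corollary says only to combine Theorem \ref{th:4.1} with Theorem \ref{th:2.6} and Corollary \ref{cor:2.7}, and no separate proof is written out. Your last paragraph is a correct observation rather than a defect in your reasoning---the identity $\Psi_E=|\cdot|_c$ and the capacity value $1/\sqrt 2$ are available only for the full sphere, so for a proper compact $E\subset{\mathbb S}^{n-1}$ the constant in (\ref{eq:4.6}) would have to be relaxed to $\sigma_m/\varrho^h(E)$; the paper's wording is imprecise on this point.
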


Extremal plurisubharmonic functions have been applied for 
extension of holomorphic functions in the same spirit
as Theorem \ref{th:4.1}.  For example the following 
result of Siciak \cite{Sic:1982}, Th.~13.4:

\begin{theorem}
  \label{th:4.3}
Let $E$ be a subset of $\C^n$ with $\varrho^h(E)>0$ 
and $P_k$ be a  homogeneous
polynomial on $\C^n$ of degree $k$ for $k=0,1,2,\dots$.
Assume that the series $\sum_{k=0}^\infty P_k$ converges
at every point of $E$ outside a subset of homogeneous
capacity zero. Then the series converges locally uniformly
in $\{z\in \C^n\,;\, \Psi_E^*(z)<1\}$.
\end{theorem}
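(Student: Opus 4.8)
The plan is to translate the hypothesis into an estimate for the logarithmically homogeneous functions $v_k=\tfrac1k\log|P_k|$ and then to show that their upper limit is dominated by $V_E^{h*}=\log\Psi_E^*$; once this is known, local uniform convergence on the open set $\{\zeta\in\C^n\,;\,\Psi_E^*(\zeta)<1\}=\{V_E^{h*}<0\}$ will drop out of Hartogs' lemma. Each $v_k$ lies in $\L^h$, since $|P_k(\zeta)|\le\|P_k\|_S\,|\zeta|^k$ on the unit sphere $S$. The soft inputs I would use are the Brelot--Cartan theorem (for a family of plurisubharmonic functions that is locally uniformly bounded above, the regularized upper limit is plurisubharmonic and exceeds the pointwise upper limit only on a pluripolar set) and the negligibility of pluripolar sets for the extremal function, namely $V_{E\setminus P}^{h*}=V_E^{h*}$ whenever $P$ is pluripolar.

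The elementary step comes first. If $\sum_k P_k(\zeta)$ converges then $P_k(\zeta)\to0$, so $|P_k(\zeta)|\le1$ for large $k$ and hence $\limsup_k v_k(\zeta)\le0$. Writing $E'$ for the set of convergence and $N=E\setminus E'$, the assumption $\varrho^h(N)=0$ forces $N$ to be pluripolar, so $g:=\limsup_k v_k$ satisfies $g\le0$ on $E$ off the pluripolar set $N$.

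The main obstacle is that $(v_k)$ need not be locally bounded above, so a priori neither the regularization $g^*$ nor Hartogs' lemma is available, and pointwise control on $E'$ says nothing directly about the numbers $\|P_k\|_E^{1/k}$. I would remove this obstacle by a countable-union argument. Fix $\varepsilon>0$ and set $F_m=\{\zeta\in E\,;\,|P_k(\zeta)|\le e^{k\varepsilon}\ \text{for all}\ k\ge m\}$ (these are closed, and we may take $E$ compact as in Section \ref{sec:2}). Every point of $E'$ lies in some $F_m$, so $E\subseteq N\cup\bigcup_m F_m$; since a countable union of pluripolar sets is pluripolar while $E$ is not, some $F_{m_0}$ is non-pluripolar, and then so is its circular hull, whence $V_{F_{m_0}}^{h*}\in\L^h$ is a genuine plurisubharmonic function. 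On $F_{m_0}$ we have $v_k\le\varepsilon$ for $k\ge m_0$, while the finitely many remaining $v_k$ are bounded above there; thus $v_k\le M$ on $F_{m_0}$ for all $k$, with $M$ independent of $k$. Applying the definition of $V_{F_{m_0}}^h$ to each $v_k-M\in\L^h$ gives $v_k\le M+V_{F_{m_0}}^{h*}$ throughout $\C^n$, so $(v_k)$ is locally uniformly bounded above.

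With local boundedness secured the conclusion follows. By Brelot--Cartan, $g^*\in\L^h$ and $\{g<g^*\}$ is pluripolar; together with $g\le0$ off $N$ this gives $g^*\le0$ on $E$ except on a pluripolar set $P$. Since $g^*\in\L^h$, the definition of the extremal function and its negligibility property yield $g^*\le V_{E\setminus P}^{h*}=V_E^{h*}$, hence $\limsup_k v_k\le g^*\le V_E^{h*}=\log\Psi_E^*$ everywhere. Finally let $K$ be a compact subset of $\{\Psi_E^*<1\}$ and set $r=\sup_K\Psi_E^*<1$. Then $\limsup_k v_k\le\log r$ on $K$, so Hartogs' lemma provides $k_0$ and $q\in(r,1)$ with $\tfrac1k\log|P_k|\le\log q$, i.e. $|P_k|\le q^k$, on $K$ for all $k\ge k_0$. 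Therefore $\sum_k\sup_K|P_k|\le\sum_k q^k<\infty$, the series converges uniformly on $K$, and as $K$ was arbitrary it converges locally uniformly on $\{\Psi_E^*<1\}$.
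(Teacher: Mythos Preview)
The paper does not give its own proof of this theorem; it is quoted from Siciak \cite{Sic:1982}, Th.~13.4, as an example of a related extension result. So there is nothing to compare against, and the question is simply whether your argument is sound.

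Your overall strategy---bound $v_k=\tfrac1k\log|P_k|$ uniformly via a Baire-type exhaustion, apply Brelot--Cartan/Bedford--Taylor to get $g^*\in\L^h$, then use negligibility and Hartogs' lemma---is the standard one and works. There is, however, a genuine (though easily repaired) slip. You write ``since a countable union of pluripolar sets is pluripolar while $E$ is not, some $F_{m_0}$ is non-pluripolar''. But $\varrho^h(E)>0$ does \emph{not} imply that $E$ is non-pluripolar: in $\C$, the single point $E=\{1\}$ is polar, yet $\Psi_{\{1\}}(\zeta)=|\zeta|$ and $\varrho^h(\{1\})=1$. What $\varrho^h(E)>0$ gives you (see the discussion around the definition of $\varrho^h$ in Section~\ref{sec:2}) is that the \emph{circular hull} $E_c$ is non-pluripolar. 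The cleanest fix is to replace $E$ by $E_c$ at the outset: this is harmless because $\Psi_E=\Psi_{E_c}$ and because convergence of $\sum_k P_k(\zeta)$ is invariant under $\zeta\mapsto t\zeta$, $|t|=1$ (each $|P_k|$ is). With $E$ circular, your sets $N$ and $F_m$ are circular as well, $E$ is genuinely non-pluripolar, and the countable-union step goes through. The same remark applies to your use of negligibility, $V_{E\setminus P}^{h*}=V_E^{h*}$: what is needed is $\varrho^h(P)=0$, i.e.\ $P_c$ pluripolar, and once everything is circular this is automatic.

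A second, smaller point: you assert ``we may take $E$ compact as in Section~\ref{sec:2}''. Compactness is indeed how $\varrho^h$ is introduced there, but if you want to cover arbitrary $E$ you should say explicitly that one passes to an inner compact (circular) subset of positive homogeneous capacity, which exists by inner regularity. With these two clarifications your proof is complete.
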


For similar results see Forelli \cite{For:1977}
and Wiegerinck and Korevaar  \cite{WieKor:1985}.
Estimates of the growth of a plurisubharmonic function 
in $\C^n$ in terms of the  growth along a complex cone 
$\C E$ were proved in Sibony and Wong \cite{SibWon:1980}
with optimized constants in Siciak \cite{Sic:1982}, Cor.~11.2.
See also Korevaar \cite{Kor:1986,Kor:1986a}.

\section{Fourier-Laplace transforms and Paley-Wiener theorems}
\label{sec:5}

\noindent
The main motivation for studying entire functions 
is the fact that Fourier-Laplace transforms of analytic functionals,
hyperfunctions with compact support,  distributions with compact
support, and $L^2$-functions with compact support,
are entire functions of exponential type.  
For each of these 
classes of functionals and functions 
there is a variant of the Paley-Wiener theorem which describes
how  estimates of Fourier-Laplace transforms are used to 
locate hulls of carriers or supports.   
This is based on 
the fact that every convex function $H$ on $\R^n$, which is positively
homogeneous of degree~$1$,  is the supporting function  
$$
H(\xi)=\sup_{x\in K}\scalar x\xi, \qquad \xi\in \R^n,
$$ 
of a unique compact convex set $K$ which is related to $H$ by 
$$
K=\{x\in \R^n\,;\, \scalar x\xi\leq H(\xi), \forall \xi\in \R^n\}.
$$  
(For a proof see \cite{Hormander:LPDO}, Th.~4.3.2.) 
We define the Fourier transform of $u\in L^1(\R^n)$ 
by
$$
\widehat u(\xi)=\int_{\R^n} e^{-i\scalar x\xi}u(x)\, dx, \qquad \xi\in
\R^n.
$$
If $u$ has a compact support, i.e., $u$ vanishes almost everywhere
outside a compact set,  then its Fourier transform extends to an 
entire function on $\C^n$, which is called the {\it Fourier-Laplace
transform} of $u$ and is given by the formula
$$
\widehat u(\zeta)=\int_{\R^n} e^{-i\scalar x\zeta}u(x)\, dx, \qquad \zeta\in
\C^n,
$$
and if $K=\chsupp u$ with supporting function $H_K$, we have 
the estimate
$$
|\widehat u(\zeta)|\leq \int_K e^{\scalar x{\Im \zeta}} |u(x)|\, dx
\leq  \|u\|_{L^1(\R^n)}e^{H_K(\Im \zeta)},
$$
If $u\in L^1\cap L^2(\R^n)$, not necessarily with compact support,
then $\widehat u\in L^2(\R^n)$ and the Plancherel formula
$$
\|\widehat u\|_{L^2(\R^n)}=(2\pi)^{\frac n2}\|u\|_{L^2(\R^n)}
$$
implies that $u\mapsto \widehat u/(2\pi)^{\frac n2}$ extends 
from $L^1\cap L^2(\R^n)$ to an isometry on $L^2(\R^n)$.
If $u\in L^2(\R^n)$ has compact support and
$K=\chsupp u$ has supporting function $H_K$ 
then we have the growth estimate
$$
|\widehat u(\zeta)|\leq \int_K e^{\scalar x{\Im \zeta}} |u(x)|\, dx
\leq \lambda(K)^{\frac 12} \|u\|_{L^2(\R^n)}e^{H_K(\Im \zeta)},
$$
where $\lambda$ is the Lebesgue measure on $\R^n$.
The following is the original  Paley-Wiener theorem on $\R^n$:

\begin{theorem} \label{th:5.1}
The function $f\in \OO(\C^n)$ is the Fourier-Laplace
transform of an $L^2$-function with support contained in
the compact convex
subset $K$ of $\R^n$ if and only if 
$f|\R^n\in L^2(\R^n)$ and there
exists $C>0$ such that
$$
|f(\zeta)|\leq Ce^{H_K(\Im \zeta)},
\qquad \zeta\in \C^n.
$$
\end{theorem}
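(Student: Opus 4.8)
The necessity is already essentially established in the discussion preceding the statement. If $u\in L^2(\Rn)$ has support in the compact convex set $K$, then $\widehat u(\zeta)=\int_K e^{-i\scalar x\zeta}u(x)\,dx$ is entire (differentiation under the integral sign is justified because $K$ is bounded), its restriction to $\Rn$ is the $L^2$-Fourier transform of $u$ and hence lies in $L^2(\Rn)$ by Plancherel, and the estimate $|\widehat u(\zeta)|\le \lambda(K)^{1/2}\|u\|_{L^2(\Rn)}e^{H_K(\Im\zeta)}$ already displayed gives the growth bound with $C=\lambda(K)^{1/2}\|u\|_{L^2(\Rn)}$. For the sufficiency I would start from $f\in\OO(\Cn)$ with $f|\Rn\in L^2(\Rn)$, define $u\in L^2(\Rn)$ as the inverse $L^2$-Fourier transform of $f|\Rn$ so that $\widehat u=f$ on $\Rn$, and reduce everything to proving $\supp u\subseteq K$. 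The genuine difficulty is that the hypothesis supplies only the pointwise bound $|f(\zeta)|\le Ce^{H_K(\Im\zeta)}$, which controls $f$ on each space $\Rn+i\eta$ in $L^\infty$ but not in $L^1$, so the naive inverse-transform integral along a shifted contour need not converge absolutely.

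To repair this I would regularize. Put $m_\varepsilon(\zeta)=\prod_{j=1}^n\big(\sin(\varepsilon\zeta_j)/(\varepsilon\zeta_j)\big)^2$ and $f_\varepsilon=f\cdot m_\varepsilon$. Each factor is entire, bounded by $1$ on $\Rn$, bounded by $e^{2\varepsilon|\Im\zeta_j|}$ on all of $\C$, and decays like $(\Re\zeta_j)^{-2}$ along horizontal lines; consequently $f_\varepsilon$ is entire and $f_\varepsilon(\cdot+i\eta)\in L^1(\Rn)$ for every $\eta$, with an $L^1$-norm bounded, by factorization of the kernel, by a constant $C_\varepsilon$ times $e^{H_K(\eta)}e^{2\varepsilon\sum_j|\eta_j|}=C_\varepsilon e^{H_{K_\varepsilon}(\eta)}$, where $K_\varepsilon=K+[-2\varepsilon,2\varepsilon]^n$ is again compact and convex.

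The main step is a several-variable contour shift. Shifting the contour of integration one variable at a time and using the decay of $f_\varepsilon$ as $|\Re\zeta_j|\to\infty$ to kill the boundary contributions, I would show that the function
$$
u_\varepsilon(x):=(2\pi)^{-n}\int_{\Rn}e^{i\scalar x{\xi+i\eta}}f_\varepsilon(\xi+i\eta)\,d\xi
$$
is independent of $\eta\in\Rn$ and, for $\eta=0$, coincides with the inverse transform of $f_\varepsilon|\Rn$. Estimating the integral on the contour $\Im\zeta=\eta$ gives $|u_\varepsilon(x)|\le (2\pi)^{-n}e^{-\scalar x\eta}\int_{\Rn}|f_\varepsilon(\xi+i\eta)|\,d\xi\le C_\varepsilon' e^{H_{K_\varepsilon}(\eta)-\scalar x\eta}$ for every $\eta$. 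If $x\notin K_\varepsilon$ then, by the supporting-function description of $K_\varepsilon$, there is $\eta_0$ with $\scalar x{\eta_0}>H_{K_\varepsilon}(\eta_0)$; replacing $\eta$ by $t\eta_0$ and letting $t\to+\infty$ forces $u_\varepsilon(x)=0$, so $\supp u_\varepsilon\subseteq K_\varepsilon$.

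Finally I would pass to the limit. Since $m_\varepsilon\to1$ pointwise with $|m_\varepsilon|\le1$ on $\Rn$, dominated convergence gives $f_\varepsilon\to f$ in $L^2(\Rn)$, whence $u_\varepsilon\to u$ in $L^2(\Rn)$. For a fixed $\varepsilon_0>0$, all $u_\varepsilon$ with $\varepsilon\le\varepsilon_0$ are supported in $K_{\varepsilon_0}$, so the $L^2$-limit $u$ is supported in $K_{\varepsilon_0}$ as well; letting $\varepsilon_0\to0$ and using $\bigcap_{\varepsilon_0>0}K_{\varepsilon_0}=K$ (valid because $K$ is closed and convex) yields $\supp u\subseteq K$. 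I expect the one genuinely delicate point to be the rigorous justification of the iterated contour shift, where one must verify that the boundary terms at $\Re\zeta_j\to\pm\infty$ vanish uniformly in the remaining variables — precisely the property for which the factor $m_\varepsilon$ was introduced.
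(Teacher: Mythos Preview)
The paper does not give a proof of Theorem~\ref{th:5.1}; it is stated as background (``the original Paley-Wiener theorem on $\R^n$'') alongside the other Paley-Wiener variants in Section~\ref{sec:5}, all of which are quoted without proof or with a reference to the literature. So there is no argument in the paper to compare your proposal against.

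That said, your proof is correct and is essentially the classical one (as in, e.g., H\"ormander \cite{Hormander:LPDO}, Th.~7.3.1, adapted from the distributional to the $L^2$ case). The necessity is exactly what the paper records before the statement. For the sufficiency, your choice of the Fej\'er-type multiplier $m_\varepsilon(\zeta)=\prod_j\big(\sin(\varepsilon\zeta_j)/(\varepsilon\zeta_j)\big)^2$ is the standard device: each factor is bounded by $e^{2\varepsilon|\Im\zeta_j|}$ (being the square of the Fourier--Laplace transform of $\tfrac12\chi_{[-\varepsilon,\varepsilon]}$), is $\leq 1$ on $\R$, and decays like $|\Re\zeta_j|^{-2}$ along horizontal lines, so it simultaneously keeps the exponential type under control and makes $f_\varepsilon(\cdot+i\eta)$ integrable. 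The iterated contour shift is then legitimate --- for each variable the vertical sides of the rectangle carry mass $O(R^{-2})$ uniformly in $\Im\zeta_j\in[0,\eta_j]$ --- and the support conclusion for $u_\varepsilon$ follows by optimizing over $\eta$. Passage to the limit via dominated convergence in $L^2(\R^n)$ and the nesting $K_\varepsilon\searrow K$ is clean. The one point you flagged as delicate (uniform vanishing of the lateral contributions) is indeed handled by the $|\Re\zeta_j|^{-2}$ decay built into $m_\varepsilon$, exactly as you say.
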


If $u\in {\mathcal E}'(\R^n)$ is a distribution with compact support, then
its Fourier transform $\widehat u$ is in $C^\infty(\R^n)$ and
it extends to an entire holomorphic function $\widehat u$ which
is given by 
$\widehat u(\zeta)= u(e^{-i\scalar\boldcdot\zeta})$ for $\zeta\in \C^n$,
i.e., we get $\widehat u(\zeta)$ by letting 
$u$ act on the $C^\infty$ function $x\mapsto
e^{-i\scalar x\zeta}$.  The following is called the
Paley-Wiener-Schwartz-theorem.
(For a proof see \cite{Hormander:LPDO}, Th.~7.3.1.)

\begin{theorem} \label{th:5.2}
The function $f\in \OO(\C^n)$ is the Fourier-Laplace
transform of a distribution with support contained in
the compact convex
subset $K$ of $\R^n$ if and only if there
exist $C>0$ and $N\geq 0$  such that
$$
|f(\zeta)|\leq C(1+|\zeta|)^N e^{H_K(\Im \zeta)},
\qquad \zeta\in \C^n.
$$
\end{theorem}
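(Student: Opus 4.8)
The plan is to prove the two implications separately, the converse being the substantial one. For the necessity, assume $u\in\E'(\R^n)$ with $\supp u\subseteq K$. First I would note that $\zeta\mapsto e^{-i\scalar\boldcdot\zeta}$ is holomorphic from $\C^n$ into $C^\infty(\R^n)$, so that $\widehat u(\zeta)=u\big(e^{-i\scalar\boldcdot\zeta}\big)$ defines an entire function. Since $u$ has compact support it is of some finite order $N$, and the key device for getting the \emph{sharp} exponent is a $\zeta$-dependent cutoff: choose $\chi_\zeta\in C_0^\infty(\R^n)$ equal to $1$ on $K$, supported in the $\delta$-neighborhood of $K$ with $\delta=1/(1+|\zeta|)$, and satisfying $|\partial^\beta\chi_\zeta|\leq C_\beta\delta^{-|\beta|}$. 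Writing $\widehat u(\zeta)=u\big(\chi_\zeta e^{-i\scalar\boldcdot\zeta}\big)$, the finite-order estimate, the Leibniz rule, the identity $\partial^\alpha e^{-i\scalar x\zeta}=(-i\zeta)^\alpha e^{-i\scalar x\zeta}$, and $\sup_{x\in\supp\chi_\zeta}e^{\scalar x{\Im\zeta}}=e^{H_K(\Im\zeta)+\delta|\Im\zeta|}$ combine to give the bound. The derivative losses $\delta^{-|\beta|}=(1+|\zeta|)^{|\beta|}$ are polynomial and are absorbed into a factor $(1+|\zeta|)^{N'}$ with a larger exponent, while $\delta|\Im\zeta|\leq\delta|\zeta|\leq1$ costs only the bounded factor $e$. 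This yields the stated estimate with $H_K$ and no $\varepsilon$-loss.

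For the sufficiency, suppose $f\in\OO(\C^n)$ satisfies the stated bound. On $\R^n$ we have $H_K(0)=0$, so $|f(\xi)|\leq C(1+|\xi|)^N$ and $f|\R^n$ is a tempered distribution. I would define $u\in\S'(\R^n)$ as its inverse Fourier transform, so that $\widehat u=f$ on $\R^n$ by construction. It then remains to prove that $\supp u\subseteq K$, since once this is known $u\in\E'(\R^n)$, its Fourier–Laplace transform is entire and agrees with $f$ on $\R^n$, and hence $\widehat u=f$ on all of $\C^n$ by uniqueness of analytic continuation.

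The heart of the argument is the support statement, which I would prove by a contour shift. Fix $\psi\in C_0^\infty(\R^n)$; pairing and Parseval's relation give $\langle u,\psi\rangle=(2\pi)^{-n}\int_{\R^n}f(\xi)\widehat\psi(-\xi)\,d\xi$ (in the normalization fixed above). Since $\psi$ has compact support, $\widehat\psi$ extends to an entire function with $|\widehat\psi(\eta)|\leq C_M(1+|\eta|)^{-M}e^{H_{\supp\psi}(\Im\eta)}$ for every $M$ (the easy necessity direction applied to $\psi$). Using the holomorphy of $f$ and $\widehat\psi$ together with these bounds, Cauchy's theorem lets me deform the real contour to $\xi\mapsto\xi+it\xi_0$ for any fixed direction $\xi_0\in\R^n$ and any $t>0$, without changing the value of the integral. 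On this contour the integrand is bounded by a polynomial in $|\xi|$ times $\exp\big(t\,(H_K(\xi_0)-\inf_{x\in\supp\psi}\scalar x{\xi_0})\big)$. If $\supp\psi$ lies in the open half-space $\{x\in\R^n:\scalar x{\xi_0}>H_K(\xi_0)\}$, the exponent is a strictly negative multiple of $t$, so letting $t\to+\infty$ forces the ($t$-independent) integral to vanish. Hence $\langle u,\psi\rangle=0$ whenever $\supp\psi$ is separated from $K$ by such a half-space; since $K$ is compact and convex, $x\notin K$ exactly when some $\xi_0$ separates it in this way, and covering $\supp\psi\subseteq\R^n\setminus K$ by finitely many such half-spaces gives $\supp u\subseteq K$.

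The main obstacle is the rigorous justification of the multidimensional contour deformation: one must check that the shifts $\xi_j\mapsto\xi_j+it(\xi_0)_j$ can be performed one variable at a time by Cauchy's theorem, and that the polynomial growth of $f$ set against the rapid decay of $\widehat\psi$ makes all boundary contributions at infinity vanish, so that the integral is genuinely independent of $t$. A secondary delicate point, already handled above, is securing the sharp exponent $H_K$ rather than $H_{K_\varepsilon}$ in the necessity direction, which the $\zeta$-dependent cutoff resolves. Once the deformation is justified, assembling the half-space statements into $\supp u\subseteq K$ is a routine consequence of the supporting-function description $K=\{x\in\R^n:\scalar x\xi\leq H_K(\xi),\ \forall\xi\in\R^n\}$ recalled above.
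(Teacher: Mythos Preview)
The paper does not prove Theorem~\ref{th:5.2} at all; it merely cites H\"ormander's \emph{The Analysis of Linear Partial Differential Operators~I}, Theorem~7.3.1, for a proof. Your argument is correct and is essentially the standard one given there: the $\zeta$-dependent cutoff with width $\delta=(1+|\zeta|)^{-1}$ for the necessity, and the contour shift $\xi\mapsto\xi+it\xi_0$ combined with the Paley--Wiener estimate for $\widehat\psi$ for the sufficiency.

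One small wording issue in your final step: once you have shown that $\langle u,\psi\rangle=0$ whenever $\supp\psi$ lies in a single half-space $\{x:\scalar x{\xi_0}>H_K(\xi_0)\}$, the conclusion $\supp u\subseteq K$ follows immediately because every $x_0\notin K$ has such a half-space as an open neighborhood disjoint from $K$; there is no need to cover a general $\supp\psi\subseteq\R^n\setminus K$ by finitely many half-spaces and invoke a partition of unity, although that also works. Otherwise the outline is sound, including your identification of the two genuine technical points (justifying the iterated one-variable Cauchy shifts with vanishing boundary terms, and the sharp $H_K$ via the scaled cutoff).
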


Recall that an {\it analytic functional} is an element
$u$ in $\OO'(\C^ n)$, i.e., a continuous linear functional acting on the
space of entire functions.
By continuity there exists a compact subset $M$ of $\C^n$ 
and a positive constant $C$ such that
\begin{equation}
  \label{eq:5.1}
|u(\varphi)|\leq C\sup_{\zeta\in M} |\varphi(\zeta)|, 
\qquad \varphi\in \OO(\C^n).  
\end{equation}
Observe that the Hahn-Banach theorem implies that 
$u$ extends to a linear  functional on $C(M)$ satisfying
(\ref{eq:5.1}) for $\varphi\in C(M)$. By the Riesz representation
theorem there exists a complex measure $\mu$ on $M$ such that
$$
u(\varphi)=\int_M \varphi \, d\mu, \qquad \varphi\in C(M).
$$
We say that $u$ is {\it carried by} the compact subset 
$K$ of $\C^n$ if for every neighbourhood $U$ of $K$ in $\C^n$
there exists a constant $C_U$ such that
$$
|u(\varphi)|\leq C_U \sup_{\zeta\in U} |\varphi(\zeta)|, 
\qquad \varphi\in \OO(\C^n).
$$
In this case we also say that $K$ is a {\it carrier} for $u$.
The Fourier-Laplace transform of $u\in \OO'(\C^n)$ is defined by
$$
\widehat u(\zeta)=u(e^{-i\scalar{\boldcdot}\zeta}), \qquad \zeta\in \C^n.
$$
We can differentiate with respect to $\zeta_j$ and $\bar \zeta_j$ 
under the $u$-sign and conclude that $\widehat u\in \OO(\C^n)$.
Furthermore, if $u$ is carried by the compact set $K$ then we take
$\varepsilon>0$ and 
$K_\varepsilon=K+B(0,\varepsilon)$ as $U$ in the definition of a
carrier and conclude that
$$
|\widehat u(\zeta)|\leq C_\varepsilon
e^{H_K(-i\zeta)+\varepsilon|\zeta|}, \qquad \zeta\in \C^n,
$$
where $H_K$ is the supporting function of the set $K$,
now defined as
$$
H_K(\zeta)=\sup_{z\in K}\Re \scalar z\zeta, \qquad \zeta\in \C^n.
$$
Observe that here  we use the real bilinear form 
$$(z,\zeta)\mapsto \Re\scalar z\zeta=\scalar x\xi-\scalar y\eta, \quad
z=x+iy, \zeta=\xi+i\eta\in \C^n
$$
instead of the euclidean inner product 
$$
(z,\zeta)\mapsto \Re\scalar {\bar z}\zeta=\scalar x\xi+\scalar y\eta, \quad
z=x+iy, \zeta=\xi+i\eta\in \C^n,
$$
which we use to identify $\C^n$ with the real euclidean 
space $\R^{2n}$. This means that if $H:\C^n\to \R$ is convex and positively 
homogeneous of degree 1, 
$L=\{z\in \C^n\,;\, \Re \scalar {\bar z}\zeta\leq H(\zeta)\}$
and $K=\{\bar z\,;\, z\in L\}$, then
$$
H_K(\zeta)=\sup_{z\in K}\Re\scalar z\zeta =H(\zeta), \qquad \zeta\in \C^n.
$$
The following variant of the Paley-Wiener theorem is usually called
the P\'olya-Ehrenpreis-Martineau theorem.  
(For a proof see \cite{Hormander:SCV}, Th.~4.5.3.)

\begin{theorem} \label{th:5.3}
The function $f\in \OO(\C^n)$ is the Fourier-Laplace
transform of an analytic functional carried by the compact convex
subset $K$ of $\C^n$ if and only if for every $\varepsilon>0$ there
exists a constant $C_\varepsilon$ such that
$$
|f(\zeta)|\leq C_\varepsilon e^{H_K(-i\zeta)+\varepsilon|\zeta|},
\qquad \zeta\in \C^n.
$$
\end{theorem}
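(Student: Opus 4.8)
The proof splits into an easy necessity and a substantial sufficiency, so I would organize it that way. First I would dispose of necessity, which is essentially the computation preceding the statement. If $u\in\OO'(\C^n)$ is carried by $K$, then for each $\varepsilon>0$ I apply the defining estimate with the neighbourhood $U=K_\varepsilon:=K+\{z\in\C^n\,;\,|z|\le\varepsilon\}$ to the test function $z\mapsto e^{-i\scalar z\zeta}$. Since $|e^{-i\scalar z\zeta}|=e^{\Re\scalar z{-i\zeta}}$ and the supporting function of a Minkowski sum satisfies $H_{K_\varepsilon}(w)=H_K(w)+\varepsilon|w|$, with $|{-i\zeta}|=|\zeta|$, this gives $|\widehat u(\zeta)|\le C_\varepsilon e^{H_K(-i\zeta)+\varepsilon|\zeta|}$ at once. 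Injectivity of $u\mapsto\widehat u$ is equally cheap: expanding $e^{-i\scalar z\zeta}=\sum_\alpha \tfrac{(-i)^{|\alpha|}}{\alpha!}z^\alpha\zeta^\alpha$ shows that the Taylor coefficients of $\widehat u$ are exactly the numbers $\tfrac{(-i)^{|\alpha|}}{\alpha!}u(z^\alpha)$, so $\widehat u\equiv 0$ forces $u$ to vanish on all polynomials, hence on all of $\OO(\C^n)$ by density.

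The content is the converse. Given $f\in\OO(\C^n)$ with the stated bounds, I would write $f(\zeta)=\sum_\alpha a_\alpha\zeta^\alpha$ and let the identity above dictate the candidate: matching $\widehat u(\zeta)=u(e^{-i\scalar{\boldcdot}\zeta})$ with $f$ forces $u(z^\alpha)=i^{|\alpha|}\alpha!\,a_\alpha$, which I extend to polynomials by linearity. The whole problem is then to show that this functional is carried by $K$, i.e.\ that for every $\varepsilon>0$ there is $C'_\varepsilon$ with $|u(\varphi)|\le C'_\varepsilon\sup_{K_\varepsilon}|\varphi|$ for every polynomial $\varphi$, the general case following by density. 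The model is P\'olya's one-variable theorem: there the Borel transform $B(w)=\sum_k a_k\,k!\,w^{-k-1}$ converges near $\infty$, the growth hypothesis on $f$ forces $B$ to continue holomorphically to the complement of the conjugate indicator diagram, which under the present normalization is $K$, and the representation $u(\varphi)=\tfrac1{2\pi i}\oint_\gamma\varphi(w)B(w)\,dw$ over a contour $\gamma$ squeezing down onto $\partial K_\varepsilon$ exhibits $u$ as carried by $K$ and satisfies $\widehat u=f$.

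The main obstacle is the passage to $n>1$, where no elementary contour is available, and this is the step I expect to cost the most work. The plan is to replace the Borel transform by its several-variable analogue, the Fantappi\`e (Cauchy--Fantappi\`e--Leray) transform of $f$, and to integrate $\varphi$ against it over a suitable $n$-cycle lying in a neighbourhood of the projective dual of $K$. Convexity of $K$ is exactly what makes the conjugate indicator diagram coincide with $K$, through the Legendre/Hahn--Banach duality already encoded in $H_K$, and what guarantees that the cycle can be squeezed into an arbitrarily small $K_\varepsilon$. The delicate point is controlling the domain of holomorphy of the Fantappi\`e transform from the direction-by-direction bound $|f(\zeta)|\le C_\varepsilon e^{H_K(-i\zeta)+\varepsilon|\zeta|}$: one must show the indicator of $f$ does not exceed $H_K(-i\,\boldcdot)$ simultaneously in all complex directions, and then invoke convexity to recover the single convex carrier $K$ rather than a larger set.

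An alternative that sidesteps the explicit kernel is functional-analytic, and I would keep it in reserve. Topologize $\Exp_K=\bigcap_{\varepsilon>0}\{f\,;\,\sup_\zeta|f(\zeta)|e^{-H_K(-i\zeta)-\varepsilon|\zeta|}<\infty\}$ as a Fr\'echet space, observe that $u\mapsto\widehat u$ is a continuous injection of the analytic functionals carried by $K$ into $\Exp_K$, and prove surjectivity by duality: the transpose is injective, and its range is closed because the relevant $\dbar$-problem can be solved with $L^2$-bounds against the plurisubharmonic weight built from $H_K+\varepsilon|\,\boldcdot\,|$, convexity ensuring plurisubharmonicity of the weight. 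Either route makes convexity of $K$ the indispensable ingredient, and in both the surjectivity of the Fourier--Laplace transform is the genuinely hard part.
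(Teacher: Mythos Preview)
The paper does not prove Theorem~\ref{th:5.3} at all: it simply cites H\"ormander's book (\cite{Hormander:SCV}, Th.~4.5.3) and moves on. So there is no argument in the paper to compare yours against; your proposal is far more detailed than anything the paper offers for this statement.

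That said, a brief assessment. Your necessity and injectivity paragraphs are correct and standard. For sufficiency, the two routes you sketch are both legitimate, but it is worth noting that the reference the paper invokes follows your \emph{second} route: H\"ormander's Theorem~4.5.3 proves surjectivity of the Fourier--Laplace map onto the exponential-type space by $L^2$ methods with plurisubharmonic weights built from $H_K$, essentially the duality/$\dbar$ argument you describe as your ``alternative''. Your first route via a multivariable Borel/Fantappi\`e transform is closer in spirit to Martineau's original proof; it works, but the control of the domain of holomorphy of the transform and the construction of the integration cycle are genuinely technical, as you acknowledge. One small correction: plurisubharmonicity of $\zeta\mapsto H_K(-i\zeta)$ does not require convexity of $K$ (it is a supremum of pluriharmonic functions regardless, and $H_K=H_{\operatorname{conv} K}$); convexity is needed only so that the carrier you recover is $K$ itself rather than its convex hull.
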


For every subset $A$  of $\C^n$ we let $\OO'(A)$ denote the set of all
analytic functionals carried by a compact subset of $A$.  
Even if $u$ is carried by two compact subsets $K_1$ and $K_2$,
it does not mean that $u$ is carried by
$K_1\cap K_2$, i.e., in general analytic functionals do not 
have a unique minimal carrier.  
If, on the other hand, 
$u\in \OO'(\R^n)$ then by a theorem of Martineau,  $u$ has a minimal carrier in
$\R^n$ which is called the support of $u$ and is denoted by
$\supp u$. (For a proof see \cite{Hormander:LPDO}, Th.~9.1.6.)
The space $\OO'(\R^n)$ is identified with the space of hyperfunctions
with compact support.
The following theorem is a variant 
of the Paley-Wiener theorem for hyperfunctions. It
is sometimes  called the
Paley-Wiener-Martineau theorem. 
(For a proof see \cite{Hormander:LPDO}, Th.~15.1.5.)

\begin{theorem}
  \label{th:5.4}
The function $f\in \OO(\C^n)$ is the Fourier-Laplace
transform of a hyperfunction with support contained in
the compact convex
subset $K$ of $\R^n$ if and only if for every $\varepsilon>0$ there
exists $C_\varepsilon>0$ such that
$$
|f(\zeta)|\leq C_\varepsilon e^{H_K(\Im \zeta)+\varepsilon|\zeta|},
\qquad \zeta\in \C^n.
$$
\end{theorem}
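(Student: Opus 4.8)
The plan is to reduce the statement to the P\'olya--Ehrenpreis--Martineau theorem (Theorem \ref{th:5.3}) via the identification of hyperfunctions with compact support with the analytic functionals in $\OO'(\R^n)$, together with Martineau's theorem that each such functional possesses a minimal real carrier, its support. The bridge between the two theorems is the observation that for a compact set $K\subseteq \R^n$ the complex supporting function $H_K(\zeta)=\sup_{z\in K}\Re\scalar z\zeta$ restricts, along the imaginary directions, to the ordinary real supporting function: if $x\in K\subseteq \R^n$ and $\zeta=\xi+i\eta$ with $\xi,\eta\in \R^n$, then $\scalar x{-i\zeta}=-i\scalar x\xi+\scalar x\eta$, whence
\begin{equation*}
H_K(-i\zeta)=\sup_{x\in K}\Re\scalar x{-i\zeta}=\sup_{x\in K}\scalar x{\Im \zeta}=H_K(\Im \zeta).
\end{equation*}
Thus, once $K$ is real, the exponent appearing in Theorem \ref{th:5.3} and the exponent appearing in the present statement coincide term by term.

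For the necessity direction I would start from a hyperfunction $u$ with $\supp u\subseteq K$. Since $\OO'(\R^n)$ is exactly the space of hyperfunctions with compact support, such a $u$ is an analytic functional whose support lies in $K$; in particular $K$, being a compact convex subset of $\R^n\subseteq\C^n$ that contains $\supp u$, is a carrier of $u$. Theorem \ref{th:5.3} then supplies, for every $\varepsilon>0$, a constant $C_\varepsilon$ with $|\widehat u(\zeta)|\leq C_\varepsilon e^{H_K(-i\zeta)+\varepsilon|\zeta|}$, and the supporting-function identity above rewrites this as the required estimate with $H_K(\Im\zeta)$.

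For the sufficiency direction I would run the same chain backwards. Assuming the estimate $|f(\zeta)|\leq C_\varepsilon e^{H_K(\Im\zeta)+\varepsilon|\zeta|}$ and replacing $H_K(\Im\zeta)$ by $H_K(-i\zeta)$ puts it precisely in the form demanded by Theorem \ref{th:5.3}, so that $f=\widehat u$ for some analytic functional $u$ carried by $K$. Because $K$ is a compact subset of $\R^n$, we have $u\in \OO'(\R^n)$, i.e.\ $u$ is a hyperfunction with compact support. I expect this final step to be the real substance of the argument: everything else is either the cited Theorem \ref{th:5.3} or the elementary supporting-function computation, whereas the conclusion $\supp u\subseteq K$ rests on Martineau's theorem that a functional in $\OO'(\R^n)$ has a smallest carrier contained in $\R^n$, its support. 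It is exactly this existence and minimality of a real carrier that distinguishes hyperfunctions from general analytic functionals, for which (as noted earlier in the section) carriers need not be unique, so that $\supp u\subseteq K$ follows from the mere fact that the real convex set $K$ is one of its carriers.
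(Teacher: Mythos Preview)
The paper does not give its own proof of this statement; immediately before Theorem~\ref{th:5.4} it simply refers the reader to H\"ormander, \cite{Hormander:LPDO}, Th.~15.1.5. Your argument is correct and is the natural reduction of Theorem~\ref{th:5.4} to the other results quoted in Section~\ref{sec:5}: the identity $H_K(-i\zeta)=H_K(\Im\zeta)$ for $K\subseteq\R^n$ makes the estimate here coincide with that in Theorem~\ref{th:5.3}, necessity then follows because any compact superset of a carrier is again a carrier, and sufficiency follows because Theorem~\ref{th:5.3} yields $u\in\OO'(\R^n)$ carried by the real set $K$, after which Martineau's minimality of $\supp u$ among real carriers forces $\supp u\subseteq K$.
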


If $u$ is a hyperfunction, distribution or an $L^2$ function 
with compact support and $i^*_{\widehat u}$ is the regularized 
indicator function of its Fourier-Laplace transform 
then the function 
$\R^n\ni \xi \mapsto i^*_{\widehat u}(i\xi)$,
is the supporting function $H_K$ of $\chsupp u$. 
(For a proof see \cite{Sig:1986}, Th.~2.1.1.)  
This implies that 
if we have a growth estimate of the form
$$|\widehat u(\zeta)|\leq C_\varepsilon
e^{\Psi(\zeta)+\varepsilon|\zeta|}, \qquad   \zeta\in \C^n,
$$
where $\Psi$ is a function on $\C^n$, which is absolutely 
homo\-geneous of degree $1$, i.e., $\Psi(t\zeta)=|t|\Psi(\zeta)$
for every $\zeta\in \C^n$ and $t\in \C$, 
 and $C_\varepsilon>0$ 
is a constant for every
$\varepsilon>0$, then
$$
\chsupp u 
\subseteq \{x\in \R^n\,;\, \scalar x\xi\leq \Psi^*(\xi), 
\forall \xi\in \R^n\}.
$$
Now we have reviewed all the Paley-Wiener theorems which
are relevant to our application of Theorem~\ref{th:4.1}:

\begin{theorem}\label{th:5.5}
Let $E$ be a compact subset of $\C^n$ 
with positive homo\-geneous capacity
and 
$f\colon \C E\to \C$ be a function satisfying the 
conditions in Theorem~\ref{th:4.1} with
$\varrho=1$.

(i) Then $f$ extends to an entire function on $\C^n$ 
which is the Fourier-Laplace transform of an analytic functional 
with a carrier contained in the ball with center at the origin
and radius $\alpha=\sup_{|\zeta|=1} \Psi_{E,\sigma}^*(\zeta)$.

(ii) If, in addition, there exists a constant $A>0$ such
that  $i_f^*(\zeta)\leq A|\Im \zeta|$ for every $\zeta\in \C^n$, 
then $f$ is the Fourier-Laplace transform of
a hyperfunction with support contained in the
compact convex set
$$K_{E,\sigma}=\{x\in \R^n \,;\, \scalar x\xi\leq  \Psi_{E,\sigma}^*(\xi), 
\forall \xi\in \R^n\}.$$

(iii) If we have the estimate $|f(\xi)|\leq C(1+|\xi|)^N$ for all
$\xi\in \R^n$, for some constants $C>0$ and $N\geq 0$,
then $f$ is the Fourier-Laplace transform 
of a distribution with compact support in $K_{E,\sigma}$.

(iv) Finally, if we can conclude that
$f$ is in $L^2(\R^n)$, then $f$ is the Fourier-Laplace transform 
of a function  in  $L^2(\R^n)$ with support in $K_{E,\sigma}$.
\end{theorem}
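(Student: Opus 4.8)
The plan is to apply Theorem~\ref{th:4.1} to produce the entire extension, then feed the resulting growth estimate into each of the four Paley-Wiener theorems recalled above (Theorems~\ref{th:5.1}--\ref{th:5.4}), adapting the ambient estimate to the hypotheses specific to each part. First I would invoke Theorem~\ref{th:4.1} with $\varrho=1$ and $\gamma=\sigma$. This yields an entire extension $f\in\OO(\C^n)$ satisfying, for every $\varepsilon>0$,
$$
|f(\zeta)|\leq C_\varepsilon\,e^{\Psi_{E,\sigma}(\zeta)+\varepsilon|\zeta|},\qquad \zeta\in\C^n,
$$
together with $i_f\leq \Psi_{E,\sigma}$. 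Since $\Psi_{E,\sigma}\leq\Psi_{E,\sigma}^*$ and the latter is plurisubharmonic, absolutely homogeneous of degree $1$, and bounded above on the unit sphere by $\alpha=\sup_{|\zeta|=1}\Psi_{E,\sigma}^*(\zeta)$, I obtain the global bound $\Psi_{E,\sigma}^*(\zeta)\leq\alpha|\zeta|$ and hence
$$
|f(\zeta)|\leq C_\varepsilon\,e^{\alpha|\zeta|+\varepsilon|\zeta|}=C_\varepsilon\,e^{(\alpha+\varepsilon)|\zeta|},\qquad\zeta\in\C^n.
$$

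For part (i) I would recognize $\alpha|\zeta|$ as the supporting function of the closed ball $\overline{B}(0,\alpha)$ with respect to the real bilinear pairing; more precisely, the function $\zeta\mapsto\alpha|\zeta|$ dominates $H_K(-i\zeta)$ for $K=\overline B(0,\alpha)$, so the displayed estimate is exactly of the form required by the P\'olya-Ehrenpreis-Martineau theorem (Theorem~\ref{th:5.3}). It follows that $f=\widehat u$ for an analytic functional $u\in\OO'(\C^n)$ carried by $\overline B(0,\alpha)$. For part (ii), the additional hypothesis $i_f^*(\zeta)\leq A|\Im\zeta|$ is precisely the information needed to replace the isotropic bound by an anisotropic one supported on the imaginary directions: combining $i_f\leq\Psi_{E,\sigma}$ with the general identity $\Psi_{E,\sigma}^*(\zeta)$ being absolutely homogeneous and plurisubharmonic, one checks that $i_f^*(\zeta)\leq\Psi_{E,\sigma}^*(\Im\zeta)$, which gives the growth estimate
$$
|f(\zeta)|\leq C_\varepsilon\,e^{\Psi_{E,\sigma}^*(\Im\zeta)+\varepsilon|\zeta|},\qquad\zeta\in\C^n.
$$
Since $\Psi_{E,\sigma}^*$ is convex and positively homogeneous of degree $1$ on $\R^n$, it is the supporting function $H_{K_{E,\sigma}}$ of the compact convex set $K_{E,\sigma}$ defined in the statement, so the Paley-Wiener-Martineau theorem (Theorem~\ref{th:5.4}) identifies $f$ as the Fourier-Laplace transform of a hyperfunction with support in $K_{E,\sigma}$.

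Parts (iii) and (iv) then follow by refining the same hyperfunction estimate with the extra polynomial or $L^2$ control on the real axis. Under the hypothesis $|f(\xi)|\leq C(1+|\xi|)^N$ on $\R^n$, I would combine this with the estimate from (ii) and invoke the Paley-Wiener-Schwartz theorem (Theorem~\ref{th:5.2}) to upgrade the hyperfunction to a distribution with support in $K_{E,\sigma}$; the polynomial factor matches the $(1+|\zeta|)^N$ allowed there, while the exponential part is again $H_{K_{E,\sigma}}(\Im\zeta)$. Similarly, if $f|\R^n\in L^2(\R^n)$, the original Paley-Wiener theorem (Theorem~\ref{th:5.1}) applies directly and yields an $L^2$ function with support in $K_{E,\sigma}$. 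The main obstacle, and the only genuinely non-formal step, is establishing the anisotropic refinement $i_f^*(\zeta)\leq\Psi_{E,\sigma}^*(\Im\zeta)$ in part (ii): the extension theorem only gives the isotropic indicator bound $i_f\leq\Psi_{E,\sigma}$, and converting this into a bound depending on $\Im\zeta$ alone requires the supplementary hypothesis $i_f^*(\zeta)\leq A|\Im\zeta|$ together with the homogeneity and plurisubharmonicity of $\Psi_{E,\sigma}^*$, used to propagate the vanishing of the indicator in the real directions.
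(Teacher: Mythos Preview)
Your treatment of part~(i) is correct and matches the paper's. The real difficulties lie in (ii)--(iv), where you underestimate the work needed to pass from the isotropic exponential-type bound coming out of Theorem~\ref{th:4.1} to the anisotropic estimates required by Theorems~\ref{th:5.1}, \ref{th:5.2}, and \ref{th:5.4}.

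In part~(ii) you assert that from $i_f^*\leq\Psi_{E,\sigma}^*$ and $i_f^*(\zeta)\leq A|\Im\zeta|$ one ``checks'' that $i_f^*(\zeta)\leq\Psi_{E,\sigma}^*(\Im\zeta)$ for all $\zeta\in\C^n$. No mechanism for this is given, and homogeneity and plurisubharmonicity of $\Psi_{E,\sigma}^*$ alone do not produce it; you yourself flag this as the main obstacle but leave it open. Moreover, even an indicator bound of the form $i_f^*(\zeta)\leq g(\zeta)$ does not automatically yield a growth estimate $|f(\zeta)|\leq C_\varepsilon e^{g(\zeta)+\varepsilon|\zeta|}$; converting indicator control into such an estimate is itself a theorem (the paper invokes \cite{HorSig:1998}, Th.~3.9, to get $|f(\zeta)|\leq C_\varepsilon e^{A|\Im\zeta|+\varepsilon|\zeta|}$). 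The paper's route is different and avoids your claimed inequality entirely: it first uses Theorem~\ref{th:5.4} with the crude bound $A|\Im\zeta|$ to realize $f=\widehat u$ for a hyperfunction with support in a ball, and only then locates $\chsupp u$ inside $K_{E,\sigma}$ via the identity $H_{\chsupp u}(\xi)=i_f^*(i\xi)\leq\Psi_{E,\sigma}^*(i\xi)=\Psi_{E,\sigma}^*(\xi)$. In particular the paper never asserts that $\Psi_{E,\sigma}^*|_{\R^n}$ is convex or equals $H_{K_{E,\sigma}}$; only the pointwise inequality is needed.

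In parts~(iii) and (iv) you propose to ``combine with the estimate from (ii)'' or to apply Theorem~\ref{th:5.1} ``directly,'' but neither move is available. The hypothesis of (ii) is not assumed in (iii) or (iv). For (iii), having $|f(\xi)|\leq C(1+|\xi|)^N$ only on $\R^n$ does not match Theorem~\ref{th:5.2}, which requires the polynomial--times--exponential bound on all of $\C^n$; the paper bridges this gap with a Phragm\'en--Lindel\"of argument along each complex line $z\mapsto f(\xi+z\eta)$ to obtain $|f(\zeta)|\leq 2^N C(1+|\zeta|)^N e^{A|\Im\zeta|}$, and only then invokes Theorem~\ref{th:5.2}. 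For (iv), Theorem~\ref{th:5.1} as stated demands $|f(\zeta)|\leq Ce^{H_K(\Im\zeta)}$ with no $\varepsilon$-slack, which you do not have; the paper instead mollifies, so that $f\widehat\varphi_\delta$ is bounded on $\R^n$, reduces to part~(iii), and passes to the limit $\delta\to 0$.
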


\begin{proof}  (i) By Theorem \ref{th:4.1} $f$ has an extension to an
entire function satisfying (\ref{eq:4.4}).  
The supporting function of the closed ball with center at the origin
and radius $\alpha$ is $\zeta\mapsto \alpha|\zeta|$, so (i) follows
from Theorem 5.3.  

(ii) By \cite{HorSig:1998}, Th.~3.9, it follows that
for every $\varepsilon>0$ there exists $C_\varepsilon>0$ such
that $f(\zeta)|\leq C_\varepsilon e^{A|\Im\zeta|+\varepsilon|\zeta|}$
for $\zeta\in \C^n$, and by Theorem \ref{th:5.4} the function $f$ is the
Fourier-Laplace transform of a hyperfunction $u$ with 
support contained in the ball $\{x\in \R^n\,;\, |x|\leq A\}$.
Furthermore, the supporting function of $K=\chsupp u$ 
is given by $H_K(\xi)=i_f^*(i\xi)$ for $\xi\in \R^n$.
Since $i_f^*\leq \Psi_{E,\sigma}^*$, we have $H_K(\xi)\leq
\Psi_{E,\sigma}^*(\xi)$ for every 
$\xi\in \R^n$ and it follows that $K\subseteq K_{E,\sigma}$.

(iii) Recall \cite{HorSig:1998}, Lemma 2.1, which is an
application of the Phragm\'en-Lindel\"of principle:  
Let $v$ be a subharmonic function in the upper half plane
$\C_+=\{z\in \C \,;\, \Im z>0\}$ such that for some real constants
$c$ and $a$ we have 
$v(z)\leq c+ a|z|$ for every $z\in \C_+$ and
$\varlimsup_{\C_+\ni z\to x}v(z)\leq 0$ 
for every $x\in \R$.  Then $v(z)\leq a\, \Im z$  for every 
$z\in \C_+$.

We are now going to apply this lemma to the function 
$$
v(z)=\log|f(\xi+z\eta)|-\log C -\tfrac 12 N\log 2
-N\log|1+|\xi|-iz|\eta||, \qquad z\in \C_+.
$$
It is subharmonic for $\log|f|$ is plurisubharmonic 
on $\C^n$ and the logarithmic term is a real part 
of a holomorphic function and thus harmonic.  
Since $f$ is of exponential type, there exist
constants $A$ and $B$ such that we have for every 
$\xi,\eta\in \R^n$ and every $z\in \C$ that
$
|f(\xi+z\eta)|\leq  Be^{A|\xi+z\eta|}\leq e^{\log B+A|\xi|+A|\eta||z|}$.  
By estimating the logarithmic term by $\varepsilon|\eta||z|$ 
we see that for every $\varepsilon>0$ 
there exists a constant $c_\varepsilon$ such that 
$v(z)\leq c_\varepsilon+(A+\varepsilon)|\eta||z|$ for every
$z\in \C_+$.    The estimate 
$|\xi+x\eta|\leq \sqrt 2 ||\xi|-ix|\eta||$ implies
that $\varlimsup_{\C_+\ni z\to x}v(z)\leq 0$.
By the lemma, $v(z)\leq A|\eta|\Im z$.
The inequality  $v(i)\leq A|\eta|$ implies
$$
|f(\zeta)|\leq 2^{N} C(1+|\zeta|)^Ne^{A|\Im \zeta|}, \qquad \zeta\in \C^n,
$$
and Theorem 5.2 implies that $f$ is the Fourier-Laplace
transform of a distri\-bution $u$ with support in the ball
$\{x\in \R^n \,;\, |x|\leq A\}$.  As in (ii) it follows
that $\chsupp u\subseteq K_{E,\sigma}$.

(iv) Let $u\in L^2(\R)$ be the inverse Fourier-transform 
of the restriction of $f$ to $\R^n$.  Take $0\leq \varphi\in
C_0^\infty(\R^n)$ with support in the closed unit ball, 
$\|\varphi\|_{L^1(\R^n)}=1$, and 
for every $\delta>0$ define
$\varphi_\delta$ by $\varphi_\delta(x)=\varphi(x/\delta)/\delta^n$.
Then $\|u*\varphi_\delta\|_{L^1(\R)}\leq
\delta^{-\frac n2}\|u\|_{L^2(\R^n)}\|\varphi\|_{L^2(\R^n)}$, 
so $u*\varphi_\delta$ is in 
$L^1(\R^n)$   and con\-sequently its Fourier transform
$f_\delta=f\widehat \varphi_\delta$ is bounded on the real axis.
We have $|\widehat \varphi_\delta(\zeta)|\leq e^{\delta|\Im\zeta|}$
for every $\zeta\in \C^n$, so $f_\delta$ is of exponential type
$\leq A+\delta$, if $f$ is of type $\leq A$.  From (iii) we conclude that 
$u*\varphi_\delta$ has support in the ball $\{x\in \R^n \,;\, 
|x|\leq A+\delta\}$ and since $u*\varphi_\delta\to u$ as $\delta\to 0$
in the sense of distributions we conclude that
$u$ has support in $\{x\in \R^n \,;\, 
|x|\leq A\}$.  Again, with the same argument as  in (ii) it follows that 
$\chsupp u\subseteq K_{E,\sigma}$.
\end{proof}

For a detailed study of growth properties 
of plurisubharmonic functions related to Fourier-Laplace  transforms
see H\"ormander and Sigurdsson  \cite{HorSig:1998}.

\section{Applications to Radon transforms}
\label{sec:6}

\noindent
Recall that the Radon transform  ${\mathcal R}u$ of 
rapidly decreasing continuous function  $u$ on  $\R^n$  
is defined by
\begin{equation*}
{\mathcal R}u(\omega,p) 
=\int_{\scalar x\omega=p} u(x)\, dm(x), \qquad \omega\in
{\mathbb S}^{n-1}, \ p\in \R, 
\end{equation*}
where  $dm$ is the Lebesgue measure in the hyperplane 
given by $\scalar x\omega=p$.   
If we fix $\omega\in {\mathbb S}^{n-1}$, take the Fourier
transform of $p\mapsto {\mathcal R}u(\omega,p)$ and apply
Fubini's theorem, then we get for $s\in \R$ 
\begin{multline}
\label{eq:6.1}
  \F_1\big({\mathcal R}u(\omega,\cdot)\big)(s)
=\int_{-\infty}^{+\infty}e^{-isp} \int_{\scalar x\omega=p} 
u(x)\, dm(x) \, dp 
\\
=\int_{-\infty}^{+\infty} \int_{\scalar x\omega=p} 
e^{-is\scalar x\omega}u(x)\, dm(x) \, dp 
=\F_nu(s\omega)=\widehat u(s\omega).
\end{multline}
Since $u$ is rapidly decreasing, we have 
$\widehat u\in C^{\infty}(\R^n)$. For $k=0,1,2,\dots$ we define
the  $k$-homogeneous complex polynomial 
$P_k$ by 
\begin{equation*}
P_k(\zeta) 
=\sum_{|\alpha|=k}\dfrac{\partial^\alpha \widehat u(0)}{\alpha!} \zeta^\alpha,
\qquad  \zeta\in \C^n.
\end{equation*}
Observe that a priori  nothing is known about the convergence of
$\sum_{k=0}^\infty P_k$.

If $E\subseteq {\mathbb S}^{n-1}$ and for
every $\omega\in E$ the function
$\R\ni p\mapsto {\mathcal R}u(\omega,p)$ 
has compact support, contained in the interval 
$[a_\omega,b_\omega]$,  then
$\F_1\big({\mathcal R}u(\omega,\cdot)\big)$
extends to an entire function on $\C$ satisfying
the estimate
\begin{equation}
\label{eq:6.2}
 \big|\F_1\big({\mathcal R}u(\omega,\cdot)\big)(z)\big|
\leq \|{\mathcal R}u(\omega,\cdot)\|_{L^1(\R)}
e^{H_\omega(\Im z)}, \qquad z\in \C, 
\end{equation}
where $H_\omega$ is the supporting function 
of the interval $[a_\omega,b_\omega]$, i.e.,
$H_\omega(t)=a_\omega t$ for $t\leq 0$ and
$H_\omega(t)=b_\omega t$ for $t\geq 0$.

If we assume that $E$ has positive homogeneous capacity
in the sense of Siciak, and the function $\sigma$
defined by $\sigma(\omega)=\max\{-a_\omega,b_\omega\}$ is
bounded above on $E$, then the assumptions of Theorem 
\ref{th:4.1} are satisfied for the function $f\colon \C E\to \C$ defined
by $f(z\omega)=\F_1\big({\mathcal R}u(\omega,\cdot)\big)(z)$.
Hence   $f$ extends to an entire function on $\C^n$ and 
it is given by the Taylor series of $\widehat u$ at the origin.
Furthermore, Theorems \ref{th:5.1} and  \ref{th:5.5} 
imply  that $u$ has compact support  contained in 
$K_{E,\sigma}=\{x\in \R^n \,;\, \scalar x\xi\leq  \Psi_{E,\sigma}^*(\xi), 
\forall \xi\in \R^n\}$.

By (\ref{eq:6.1}) and (\ref{eq:6.2}) we have
$$
i_{\widehat u}(-i\omega) =\varlimsup_{t\to +\infty}\dfrac 1t
\log|\F_1{\mathcal R}u(\omega,\cdot)(-it)|\leq -a_\omega,
\qquad \omega\in E,
$$
and
$$
i_{\widehat u}(i\omega) =\varlimsup_{t\to +\infty}\dfrac 1t
\log|\F_1{\mathcal R}u(\omega,\cdot)(it)|\leq b_\omega,
\qquad \omega\in E.
$$ 
By Sigurdsson \cite{Sig:1986}, Th.~2.1.1, 
and by Wiegerinck  \cite{Wie:1984}, Th.~2,
the supporting function  $H$ of $\chsupp u$ is given by 
$H(\omega)=i_{\widehat u}^*(i\omega)$ for every $\omega\in {\mathbb
  S}^{n-1}$ and  we have
$$
H(\omega)
=\varlimsup_{{\mathbb S}^{n-1}\ni \tilde \omega\to \omega}i_{\widehat
  u}(i\tilde \omega), \qquad \omega\in {\mathbb S}^{n-1}.
$$
If we assume that $\omega\mapsto a_\omega$ and $\omega\mapsto
b_\omega$ are lower and upper semi-continuous, respectively,
then this  implies that for every interior point $\omega$ of $E$
$$
H(-\omega)\leq -a_\omega \quad \text{ and } \quad 
H(\omega)\leq b_\omega.
$$
If we assume that $E$ is the closure of its interior in ${\mathbb
  S}^{n-1}$, then these in\-equal\-ities hold at every point in $E$,
and we conclude that for every $x\in \chsupp u$ and every $\omega\in
E$ we have 
$$
a_\omega\leq -H(-\omega)\leq \scalar x\omega \leq H(\omega)\leq b_\omega.
$$
We summarize our argument in

\begin{theorem}
  \label{th:6.1} 
Let $u\in C(\R^n)$ be rapidly decreasing
and $E$ be a non-empty compact subset of ${\mathbb S}^{n-1}$,
which has positive homogeneous capacity.
Assume that the function  
$\R\ni p\mapsto {\mathcal R}u(\omega,p)$
has support contained in the closed bounded
interval $[a_\omega,b_\omega]$ for every
$\omega$ in $E$, where 
the  functions $\omega\mapsto a_\omega$ 
and $\omega\mapsto b_\omega$ are bounded from below  
and above, respectively, and set
$\sigma(\omega)=\max\{-a_\omega,b_\omega\}$ for $\omega\in E$.
Then the support of $u$ is contained in the compact
convex set 
\begin{equation*}
\{x\in \R^n \,;\, \scalar x\omega\leq  \Psi_{E,\sigma}^*(\omega), 
\forall \omega\in {\mathbb S}^{n-1}\}.
\end{equation*}
If $E$  is the closure of its relative interior in ${\mathbb
  S}^{n-1}$ and the functions $\omega\mapsto a_\omega$ 
and $\omega\mapsto b_\omega$ are lower and upper
semi-continuous, respectively,
then the support of $u$ is contained in the compact
convex set 
\begin{equation*}
\{x\in \R^n \,;\, \scalar x\omega\leq  \Psi_{E,\sigma}^*(\omega), 
\forall \omega\in {\mathbb S}^{n-1}, 
 a_\omega\leq \scalar x\omega \leq b_\omega,
\forall \omega\in E\}.
\end{equation*}
\end{theorem}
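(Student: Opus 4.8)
The plan is to recognize the Radon set-up as an instance of Theorem~\ref{th:4.1} with $\varrho=1$ and then feed the resulting growth estimate into the Paley-Wiener machinery of Section~\ref{sec:5}, deferring the semicontinuity hypotheses to the very last step. First I would record the Fourier-slice identity (\ref{eq:6.1}), $\F_1\big({\mathcal R}u(\omega,\cdot)\big)(s)=\widehat u(s\omega)$, and note that rapid decrease of $u$ makes $\widehat u$ a Schwartz function, so its homogeneous Taylor polynomials $P_k$ at the origin are well defined and furnish the data in (\ref{eq:4.2}) via the chain rule. For each $\omega\in E$ the compact support of $p\mapsto{\mathcal R}u(\omega,p)$ in $[a_\omega,b_\omega]$ makes $s\mapsto\widehat u(s\omega)$ the restriction of an entire function of exponential type obeying (\ref{eq:6.2}); in particular it has type $\leq\sigma(\omega)$. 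Setting $f=\widehat u|_{\C E}$, all hypotheses of Theorem~\ref{th:4.1} hold with $\gamma=\sigma$, so $f$ extends to an entire $F=\sum_{k}P_k$ with $i_F\leq\Psi_{E,\sigma}$, hence $i_F^*\leq\Psi_{E,\sigma}^*$.

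For the first inclusion I would identify $F$ on $\R^n$ with $\widehat u$: the two agree on the real cone $\R E$ and share the same Taylor series at the origin, and since $\widehat u$ is Schwartz the extension recovers it, so $F|_{\R^n}=\widehat u$ lies in $L^2(\R^n)$. Theorem~\ref{th:5.5}(iv), resting on Theorem~\ref{th:5.1}, then shows that $u$, the inverse transform of $F|_{\R^n}$, has compact support contained in $K_{E,\sigma}$. Equivalently, once $\widehat u$ is known to be entire of exponential type, the supporting function of $\chsupp u$ is $H(\xi)=i_{\widehat u}^*(i\xi)$; since $\Psi_{E,\sigma}^*$ is absolutely homogeneous of degree~$1$ we have $\Psi_{E,\sigma}^*(i\xi)=\Psi_{E,\sigma}^*(\xi)$, whence $H(\xi)\leq\Psi_{E,\sigma}^*(\xi)$ on $\R^n$. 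Rewriting this over the sphere by homogeneity gives exactly the first compact convex set.

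The refinement is where the actual work lies. Evaluating (\ref{eq:6.2}) along the imaginary axis yields $i_{\widehat u}(i\omega)\leq b_\omega$ and $i_{\widehat u}(-i\omega)\leq-a_\omega$ for $\omega\in E$, but these bounds are only available \emph{on} $E$, whereas the supporting function reads the regularized indicator through the formula $H(\omega)=\varlimsup_{{\mathbb S}^{n-1}\ni\tilde\omega\to\omega}i_{\widehat u}(i\tilde\omega)$. The main obstacle is therefore to control this limit superior from data confined to $E$. For a relative interior point $\omega$ of $E$ the competing directions $\tilde\omega$ eventually lie in $E$, so upper semicontinuity of $b$ gives $H(\omega)\leq\varlimsup_{\tilde\omega\to\omega}b_{\tilde\omega}\leq b_\omega$, and lower semicontinuity of $a$ gives $H(-\omega)\leq-a_\omega$. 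To extend these to all of $E$ I would invoke the hypothesis that $E$ is the closure of its relative interior: approximating $\omega\in E$ by interior points $\omega_j$ and passing to the limit in $a_{\omega_j}\leq\scalar x{\omega_j}\leq b_{\omega_j}$, the semicontinuity of $a$ and $b$ yields $a_\omega\leq\scalar x\omega\leq b_\omega$ for every $x\in\chsupp u$ and every $\omega\in E$. Intersecting with the first inclusion produces the second set, completing the proof.
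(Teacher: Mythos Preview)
Your argument mirrors the paper's own proof: set up Theorem~\ref{th:4.1} via the Fourier--slice identity and the chain-rule compatibility, feed the resulting exponential-type extension into Theorem~\ref{th:5.5}(iv) to obtain the first inclusion, and then read off $H(\pm\omega)$ from the radial $\varlimsup$ formula for the regularized indicator together with the semicontinuity of $a_\omega,b_\omega$ (and the closure hypothesis on $E$) to get the refinement. One small slip: rapid decrease of a merely continuous $u$ yields $\widehat u\in C^\infty\cap L^2(\R^n)$ but not $\widehat u$ Schwartz; since only $\widehat u\in L^2$ is actually used, the argument is unaffected.
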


If $K$ satisfies  the assumptions in Helgason's theorem, i.e.,
$K$ is compact, convex, and  ${\mathcal R}(\omega,p)=0$ for every $(\omega,p)$ 
such that the hyperplane defined by the equation $\scalar x\omega=p$ does
not intersect $K$, 
and we define for $\omega\in {\mathbb S}^{n-1}$
$$
a_\omega=\inf_{x\in K}\scalar x\omega=-H_K(-\omega)
\quad \text{ and } \quad 
b_\omega=\sup_{x\in K}\scalar x\omega=H_K(\omega),
$$
then $\omega\mapsto a_\omega$ and $\omega\mapsto b_\omega$ are
continuous on ${\mathbb S}^{n-1}$,
$a_{-\omega}=-b_\omega$,
$b_{-\omega}=-a_\omega$,  the function
$p\mapsto {\mathcal R}u(\omega,p)$ has support in 
$[a_\omega,b_\omega]$, and 
$$K=\{x\in \R^n\,;\, a_\omega \leq  \scalar x\omega \leq b_\omega, \forall
\omega\in{\mathbb S}^{n-1}\}.
$$ 
Hence Theorem \ref{th:6.1} is a generalization of Helgason's support
theorem. 
It is also a generalization of a theorem of 
Wiegerinck \cite{Wie:1985} which states that 
a rapidly decreasing function $u$
has compact support under the 
assumption that $p\mapsto {\mathcal R}u(\omega,p)$ 
decreases exponentially as $|p|\to+\infty$ for every $\omega\in
{\mathbb S}^{n-1}$ and  has compact support for every
$\omega$ in an open subset $E$ of ${\mathbb S}^{n-1}$.  
His proof is based on a
very interesting result of   
Korevaar and Wiegerinck
\cite{KorWie:1985,WieKor:1985} 
on representation of mixed derivatives of functions in terms of 
higher order directional derivatives.

{\small
\bibliographystyle{plain}
\bibliography{bibref}

\noindent
Department of Mathematical Sciences, \\
Chalmers University of Technology and University of Gothenburg\\
SE-412 96 G\"oteborg, SWEDEN. \\
bergh@chalmers.se\\
Department of Mathematics, \\
School of Engineering and Natural Sciences,\\
University of Iceland,\\
IS-107 Reykjav\'ik, ICELAND. \\
ragnar@hi.is
}

\end{document}